 \newtheorem{remark}{Remark}
 \newtheorem{lemma}[remark]{Lemma}
 \newtheorem{theorem}[remark]{Theorem}
 \newtheorem{corollary}[remark]{Corollary}
\title{Domination related parameters in rooted product graphs}
\author{Dorota Kuziak$^{\dag}$, Magdalena Lema\'nska$^{\ddag}$ and Ismael G. Yero$^{\S}$\\
\\
$^{\dag}${\small Departament d'Enginyeria Inform\`atica i Matem\`atiques,}\\
{\small Universitat Rovira i Virgili,}  {\small Av. Pa\"{\i}sos
Catalans 26, 43007 Tarragona, Spain.} \\
{\small dorota.kuziak\@@urv.cat}\\
$^{\ddag}$ {\small Department of Technical Physics and Applied Mathematics} \\ {\small Gda\'nsk University of Technology, ul. Narutowicza 11/12, 80-233 Gda\'nsk, Poland }\\{\small magda\@@mifgate.mif.pg.gda.pl}\\
$^{\S}${\small Departamento de Matem\'aticas, Escuela Polit\'ecnica Superior de Algeciras}\\
{\small Universidad de C\'adiz,} {\small
Av. Ram\'on Puyol, s/n, 11202 Algeciras, Spain.} \\ {\small
ismael.gonzalez\@@uca.es}
}
\date{}
\begin{document}

\maketitle

\begin{abstract}
A set $S$ of vertices of a graph $G$ is a dominating set in $G$ if
every vertex outside of $S$ is adjacent to at least one vertex
belonging to $S$. A domination parameter of $G$ is related to those
sets of vertices of a graph satisfying some domination property together
with other conditions on the vertices of $G$. Here, we investigate
several domination related parameters in rooted product graphs.
\end{abstract}

{\it Keywords:} Domination; Roman domination; domination related parameters; rooted product graphs.

{\it AMS Subject Classification Numbers:}  05C12; 05C76.

\section{Introduction}

Domination in graph constitutes a very important area in graph theory \cite{bookdom1}. An enormous quantity of researches about domination in graphs have been developed in the last years. Nevertheless, there are still several open problems and incoming researches about that. One interesting question in this area is related to the study of domination related parameters in product graphs. For instance, the Vizing's conjecture \cite{vizing1,vizing}, is one of the most popular open problems about domination in product graphs. The Vizing's conjecture states that the domination number of Cartesian product graphs is greater than or equal to the product of the domination numbers of the factor graphs. Moreover, several kind of domination related parameters have been studied in the last years. Some of the most remarkable examples are the following ones. The domination number of direct product graphs was studied in \cite{dom-direct,dom-direct-1,dom-direct-2}. The total domination number of direct product graphs was studied in \cite{tot-dom-direct}. The upper domination number of Cartesian product graphs was studied in \cite{upper-dom-cart}. The independence domination number of Kronecker product graphs was studied in \cite{ind-dom-kron}. Several domination related parameters of corona product graphs and the conjunction of two graphs were studied in \cite{dom-par-corona} and \cite{dom-par-conjunction}, respectively. The Roman domination number of lexicographic product graphs was studied in \cite{Rom-dom-lexicog}. According to the quantity of works devoted to the study of domination related parameters in product graphs it is noted that not only Vizing's conjecture is an interesting topic related to domination in product graphs. In this sense, in this paper we pretend to contribute with the study of some domination related parameters for the case of rooted product graphs.

We begin by establishing the principal terminology and notation
which we will use throughout the article. Hereafter $G=(V,E)$
represents an undirected finite graph without loops and multiple
edges with set of vertices $V$ and set of edges $E$. The order of
$G$ is $|V|=n(G)$ and the size $|E|=m(G)$ (If there is no ambiguity
we will use only $n$ and $m$). We denote two adjacent vertices
$u,v\in V$ by $u\sim v$ and in this case we say that $uv$ is an edge
of $G$ or $uv\in E$. For a nonempty set $X\subseteq V$ and a vertex
$v\in V$, $N_X(v)$ denotes the set of neighbors that $v$ has in $X$:
$N_X(v):=\{u\in X: u\sim v\}$ and the degree of $v$ in $X$ is
denoted by $\delta_{X}(v)=|N_{X}(v)|.$ In the case $X=V$ we will use
only $N(v)$, which is also called the open neighborhood of a vertex
$v\in V$, and $\delta(v)$ to denote the degree of $v$ in $G$. The
close neighborhood of a vertex $v\in V$ is $N[v]=N(v)\cup \{v\}$.
The minimum and maximum degrees of $G$ are denoted by $\delta$ and
$\Delta$, respectively. The subgraph induced by $S\subset V$ is
denoted by $\langle S\rangle $ and the complement of the set $S$ in
$V$ is denoted by $\overline{S}$. The distance
between two vertices $u,v\in V$ of $G$ is denoted  by $d_G(u,v)$ (or
$d(u,v)$ if there is no ambiguity).

The set of vertices $D\subset V$ is a {\em dominating set} of $G$ if for
every vertex $v\in \overline{D}$ it is satisfied that
$N_D(v)\ne \emptyset$. The minimum cardinality of any dominating set of $G$ is the {\em domination number} of $G$ and it is
denoted by $\gamma(G)$.  A set $D$ is a $\gamma(G)$-set if it is a dominating set and $|D|=\gamma(G)$. Throughout the article we follow the terminology and notation of \cite{bookdom1}.

Given a graph $G$ of order $n$ and a graph $H$ with root vertex
$v$, the rooted product $G\circ H$  is defined  as the graph obtained
from $G$ and $H$ by taking one copy of $G$ and $n$ copies of $H$
and identifying the vertex $u_i$ of $G$ with the vertex $v$ in the
$i^{th}$ copy of $H$ for every $1\leq i\leq n$ \cite{rooted-first}. If $H$ or $G$ is the singleton graph, then $G\circ H$ is equal to $G$ or $H$, respectively. In this sense, to obtain the rooted product $G\circ H$, hereafter we will only consider graphs $G$ and $H$ of orders greater than or equal to two. Figure \ref{p-4-c-3} shows the case of the rooted product graph $P_4\circ C_3$. Hereafter, we will denote by
$V=\{v_1,v_2,...,v_n\}$   the set of vertices of $G$ and by
$H_i=(V_i,E_i)$  the $i^{th}$ copy of $H$ in $G\circ H$.

\begin{figure}[h]
  \centering
  \includegraphics[width=0.3\textwidth]{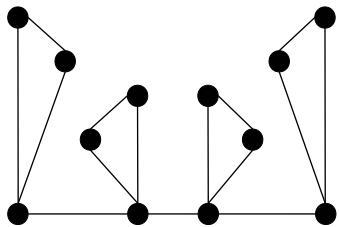}
  \caption{The rooted product graph $P_4\circ C_3$.}\label{p-4-c-3}
\end{figure}

It is clear that the value of every parameter of the
rooted product graph depends on the root of the graph $H$.
In the present article we present some results related to some domination parameters
in rooted product graphs.

\section{Domination number}

We begin with the following remark which will be useful into proving next results.

\begin{lemma}\label{rem1}
Let $G$ be a graph of order $n\ge 2$ and let $H$ be any graph with root $v$ and at least two vertices. If $v$ does not
belong to any $\gamma(H)$-set or $v$ belongs to every $\gamma(H)$-set, then  $$\gamma(G\circ H)=n\gamma(H).$$
\end{lemma}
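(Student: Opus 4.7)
The plan is to establish $\gamma(G \circ H) = n \gamma(H)$ by proving both inequalities separately. The upper bound $\gamma(G\circ H)\le n\gamma(H)$ is the easy direction: pick any $\gamma(H)$-set $D$ of $H$ and embed a copy of $D$ into each of the $n$ copies $H_i$. Under the first hypothesis $D$ omits $v$ yet must dominate it, so $D$ contains a neighbor of $v$; under the second hypothesis $D$ already contains $v$. Either way, the union $\bigcup_i D_i$ dominates every vertex of $G\circ H$ (including every root $u_i$) and has size $n\gamma(H)$.

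For the lower bound, I would let $S$ be a $\gamma(G\circ H)$-set and set $S_i:=S\cap V_i$. The key observation is that every vertex of $V_i\setminus\{u_i\}$ has all its $G\circ H$-neighbors inside $V_i$, so $S_i$ must dominate $V_i\setminus\{u_i\}$ within $H_i$; equivalently, $S_i\cup\{u_i\}$ is always a dominating set of $H_i\cong H$. This yields baseline bounds $|S_i|\ge\gamma(H)$ when $u_i\in S_i$ and $|S_i|\ge\gamma(H)-1$ otherwise, and it is enough to upgrade these to $|S_i|\ge\gamma(H)$ in every case, since summing over $i$ then finishes. Under the first hypothesis the upgrade is immediate: if $u_i\in S_i$ with $|S_i|=\gamma(H)$, then $S_i$ would be a $\gamma(H)$-set of $H_i$ containing $v$, which is forbidden; and if $u_i\notin S_i$ with $|S_i|=\gamma(H)-1$, then $S_i\cup\{v\}$ would be a $\gamma(H)$-set containing $v$, again forbidden.

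Under the second hypothesis, the analysis of the case $u_i\notin S_i$ splits according to whether $u_i$ has a neighbor in $S_i$. If yes, then $S_i$ itself dominates $H_i$ while avoiding $v$, so the hypothesis rules out $|S_i|=\gamma(H)$ and forces $|S_i|\ge\gamma(H)+1$. The delicate case is when $u_i\notin S_i$ and $u_i$ has no neighbor in $S_i$: then $u_i$ is necessarily dominated externally by some $u_j\in S$ with $u_j\sim u_i$ in $G$, while $S_i$ is a dominating set of $H-v$. The main obstacle is the auxiliary inequality $\gamma(H-v)\ge\gamma(H)$ under the second hypothesis, which I would prove by contradiction: a dominating set $T$ of $H-v$ with $|T|=\gamma(H)-1$, together with any neighbor $w$ of $v$ in $H$, would give a set $T\cup\{w\}$ that dominates $H$, has size $\gamma(H)$, and avoids $v$, contradicting the hypothesis. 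With this auxiliary bound in hand, $|S_i|\ge\gamma(H)$ holds in every case under the second hypothesis as well, completing the proof.
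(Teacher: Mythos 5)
Your proposal is correct and follows essentially the same route as the paper: both decompose a minimum dominating set $S$ of $G\circ H$ into the pieces $S_i=S\cap V_i$ and show $|S_i|\ge\gamma(H)$ copy by copy, according to whether the root lies in $S_i$, using exactly the two auxiliary facts $\gamma(H-v)=\gamma(H)$ (when $v$ lies in no $\gamma(H)$-set) and $\gamma(H-v)\ge\gamma(H)$ (when $v$ lies in every one), which you establish by the same exchange arguments the paper sketches. The only caveat, shared with the paper's own proof, is that your step ``together with any neighbor $w$ of $v$'' silently assumes the root is not isolated in $H$.
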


\begin{proof}
If $A_i$ is a dominating set of minimum cardinality in $H_i=(V_i,E_i)$, $i\in \{1,...,n\}$, then it is clear that $\bigcup_{i=1}^nA_i$ is a dominating set in $G\circ H$. Thus $\gamma(G\circ H)\le n\gamma(H)$. Suppose $v$ does not
belong to any $\gamma(H)$-set. Let $S$ be a $\gamma(G\circ H)$-set and let $S_i=S\cap V_i$ for every $i\in \{1,...,n\}$. Notice that the set $S_i$ dominates all the vertices of $H_i$ except maybe the root $v_i$ which could be dominated by other vertex not in $H_i$.

If $v_j\notin S_j$ for some $j\in \{1,...,n\}$, then $S_j$ is a dominating set in $H_j-v_j$. So $\gamma(H_j-v_j)\le |S_j|$. Moreover, since $v_j$ does not belong to any $\gamma(H_j)$-set, it is satisfied that $\gamma(H_j-v_j)=\gamma(H_j)$. If  $|S_j|<\gamma(H)$, then we have that $\gamma(H_j-v_j)\le |S_j|<\gamma(H_j)=\gamma(H_j-v_j)$, a contradiction. On the other side, if $v_l\in S_l$ for some $l\in \{1,...,n\}$, then $S_l$ is a dominating set in $H_l$. So $\gamma(H_l)\le |S_l|$.  Therefore, $|S_i|\ge \gamma(H)$ for every $i\in \{1,...,n\}$ and we obtain that $\gamma(G\circ H)=n\gamma(H)$.

On the other hand, let us suppose $v$ belongs to every $\gamma(H)$-set. Thus, $v$ dominates at least one vertex in $H$ which is not dominated by any other vertex in every $\gamma(H)$-set and, as a consequence, $\gamma(H-v)\ge \gamma(H)$. As above, $S$ denotes a $\gamma(G\circ H)$-set and $S_i=S\cap V_i$ for every $i\in \{1,...,n\}$. If $v_j\notin S_j$ for some $j\in \{1,...,n\}$, then either $S_j$ is not a dominating set in $H_j$ (which is a contradiction) or $|S_j|\ge \gamma(H_j-v_j)\ge \gamma(H_j)$. On the contrary, if $v_j\in S_j$, then $S_j$ is a dominating set in $H_j$ and $|S_j|\ge \gamma(H_j)$. Therefore, we have that $|S|=\sum_{i=1}^n|S_i|\ge \sum_{i=1}^n\gamma(H_i)=n\gamma(H)$ and the proof is complete.
\end{proof}

\begin{theorem}
Let $G$ be a graph of order $n\ge 2$. Then for any graph $H$ with root $v$ and at least two vertices,
$$\gamma(G\circ H)\in \{n\gamma(H),n(\gamma(H)-1)+\gamma(G)\}.$$
\end{theorem}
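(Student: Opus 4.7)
The plan is to sandwich $\gamma(G\circ H)$ between the two candidate values: to prove the unconditional lower bound $\gamma(G\circ H)\ge n(\gamma(H)-1)+\gamma(G)$, to note the trivial upper bound $\gamma(G\circ H)\le n\gamma(H)$ (place a $\gamma(H)$-set in every copy), and then to argue that whenever the upper bound fails to be tight, the lower bound is attained.

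For the lower bound I fix a $\gamma(G\circ H)$-set $S$, set $S_i=S\cap V_i$, and partition $\{1,\dots,n\}$ into $X=\{i:\, v_i\in S\text{ or }N_{V_i}(v_i)\cap S_i\ne\emptyset\}$ and $Y=\{1,\dots,n\}\setminus X$. Every non-root vertex of $V_i$ has all its neighbours inside $V_i$, so $S_i$ automatically dominates $V_i\setminus\{v_i\}$. For $i\in X$ the set $S_i$ in fact dominates all of $V_i$, hence $|S_i|\ge\gamma(H)$; for $i\in Y$ the root $v_i$ is not dominated by $S_i$, so $S_i\cup\{v_i\}$ is a dominating set of $H_i$ and therefore $|S_i|\ge\gamma(H)-1$, and $v_i$ has to be dominated by some $v_j\in S\cap V(G)\subseteq\{v_\ell:\ell\in X\}$ through a $G$-edge. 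Consequently $\{v_\ell:\ell\in X\}$ is a dominating set of $G$, so $|X|\ge\gamma(G)$, and summing gives $|S|\ge|X|\gamma(H)+|Y|(\gamma(H)-1)=n(\gamma(H)-1)+|X|\ge n(\gamma(H)-1)+\gamma(G)$.

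To finish, assume $\gamma(G\circ H)<n\gamma(H)$. Then in any optimum $S$ at least one $|S_i|$ must equal $\gamma(H)-1$, and transferring $S_i$ through the canonical isomorphism $H_i\cong H$ produces a set $A\subseteq V(H)$ of cardinality $\gamma(H)-1$ dominating $V(H)\setminus\{v\}$, with $v\notin A$ (otherwise $A$ would be a dominating set of $H$ of size strictly less than $\gamma(H)$). Taking a $\gamma(G)$-set $D^*$, I build a dominating set of $G\circ H$ by placing the copy of $A\cup\{v\}$ inside $H_i$ whenever $v_i\in D^*$ and the copy of $A$ inside $H_i$ whenever $v_i\notin D^*$. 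Each non-root vertex is dominated within its own copy, each root $v_i\notin D^*$ is dominated through $G$ by $D^*$, and the total cardinality equals $|D^*|\gamma(H)+(n-|D^*|)(\gamma(H)-1)=n(\gamma(H)-1)+\gamma(G)$, which together with the lower bound forces equality.

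The main obstacle is the lower bound: one has to single out the correct index partition, observe that the roots dominated inside their own copy must themselves form a dominating set of $G$, and package this structural fact into the clean inequality $|X|\ge\gamma(G)$. Once that is done, the dichotomy and the matching construction fall out essentially for free, and the conclusion $\gamma(G\circ H)\in\{n\gamma(H),\,n(\gamma(H)-1)+\gamma(G)\}$ follows.
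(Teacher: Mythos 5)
Your proof is correct and follows essentially the same route as the paper: the same per-copy bounds $|S_i|\ge\gamma(H)-1$ with $|S_i|\ge\gamma(H)$ when the root is dominated inside its copy, the same observation that those roots form a dominating set of $G$ giving $|X|\ge\gamma(G)$, and the same construction (a $(\gamma(H)-1)$-set dominating $H-v$ in each copy plus a $\gamma(G)$-set on the roots) for the matching upper bound when $\gamma(G\circ H)<n\gamma(H)$. The only cosmetic difference is that you state the lower bound unconditionally, while the paper develops it inside the case $\gamma(G\circ H)<n\gamma(H)$; the underlying argument is identical.
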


\begin{proof}
It is clear that $\gamma(G\circ H)\le n\gamma(H)$ and also, from Lemma \ref{rem1}, there are rooted product graphs $G\circ H$ such
that $\gamma(G\circ H)=n\gamma(H)$. Now, let us  suppose that
$\gamma(G\circ H)<n\gamma(H)$. Let $V$ be the set of vertices of $G$
and let $V_i$, $i\in \{1,...,n\}$, be the set of vertices of the
copy $H_i$ of $H$ in $G\circ H$. Hence, if $S$ is a $\gamma(G\circ H)$-set, then there exists $j\in
\{1,...,n\}$ such that $|S\cap V_j|<\gamma(H)$. Notice that the set $S\cap V_j$ dominates all the vertices in $V_j$ excluding $v_j$. If $|S\cap V_j|<\gamma(H)-1$, then the set $(S\cap V_j)\cup \{v_j\}$ is a dominating set in $H_j$ and $|(S\cap V_j)\cup \{v_j\}|\le |(S\cap V_j)|+1<\gamma(H)$, which is a contradiction. So, $|S\cap V_i|\ge \gamma(H)-1$ for every $i\in \{1,...,n\}$.

Let $x$ be the number of copies $H_{j_1},H_{j_2},...,H_{j_x}$ of $H$ in which the vertex $v_{j_i}$ of $G$ is not dominated by $S\cap V_{j_i}$ ({\em i.e.}, $v_{j_i}$ is dominated by a vertex of $G$ belonging to other copy $H_l$, with $l\notin\{j_1,...,j_x\}$). On the contrary, let $y=n-x$ be the number of copies $H_{k_1},H_{k_2},...,H_{k_y}$ of $H$ in which the vertex $v_{k_i}$ of $G$ is dominated by $S\cap V_{k_i}$ or $v_{k_i}\in S$. Note that the $y$ vertices $v_{k_i}$ of $G$ satisfying the above property form a dominating set in $G$ and, as a consequence, $\gamma(G)\le y$. Since $n=x+y$, we have that $x\le n-\gamma(G)$. Also, notice that if the vertex $v_{k_i}$ of $G$ is dominated by $S\cap V_{k_i}$ or $v_{k_i}\in S$, then $S\cap V_{k_i}$ is a dominating set in $H_{k_i}$. So, $\gamma(H)\le |S\cap V_{k_i}|$ for every copy $H_{k_i}$ in which the vertex $v_{k_i}$ of $G$ is dominated by $S\cap V_{k_i}$ or $v_{k_i}\in S$. Thus we have the following.

\begin{align*}
\gamma(G\circ H)=|S|&=\left|\bigcup_{i=1}^{x}(S\cap V_{j_i})\cup\bigcup_{i=1}^{y}(S\cap V_{k_i})\right|\\
&=\sum_{i=1}^{x}|S\cap V_{j_i}|+\sum_{i=1}^{y}|S\cap V_{k_i}|\\
&\ge x(\gamma(H)-1)+y\gamma(H)\\
&=n\gamma(H)-x\\
&\ge n\gamma(H)-n+\gamma(G)\\
&=n(\gamma(H)-1)+\gamma(G).
\end{align*}

On the other side, let $A$ be a $\gamma(G\circ H)$-set. Since $\gamma(G\circ H)<n\gamma(H)$, there exists at least one copy $H_k$ of $H$ such that $|A\cap V_k|<\gamma(H)$, which implies $|A\cap V_k|\le \gamma(H)-1$. Since $A\cap V_k$ dominates all the vertices of $H_k$ except maybe the root $v_k$, we have that if $v_k\in A\cap V_k$, then $A\cap V_k$ is a dominating set in $H$, which is a contradiction. So, $v_k\notin A\cap V_k$. Now, as $|A\cap V_i|\ge \gamma(H)-1$ for every $i\in \{1,...,n\}$, we obtain that $|A\cap V_k|=\gamma(H)-1$. So, $A'=(A\cap V_k)\cup \{v_k\}$ is a $\gamma(H)$-set. Let us denote by $A'_i$, $i\in \{1,...,n\}$, the set of vertices of $A'-\{v_i\}$ in each copy $H_i$ of $G\circ H$.

Let $B$ be a $\gamma(G)$-set and let $D=\left(\bigcup_{i=1}^nA'_i\right)\cup B$. Since $A'_i$ dominates the vertices of $H_i-\{v_i\}$ for every $i\in \{1,...,n\}$ and $B$ dominates the vertices of $G$, we obtain that $D$ is a dominating set in $G\circ H$. Thus $$|D|=\sum_{i=1}^n|A'_i|+|B|=n(|A'|-1)+|B|=n(\gamma(H)-1)+\gamma(G).$$

Therefore, we obtain that $\gamma(G\circ H)\le n(\gamma(H)-1)+\gamma(G)$
and the result follows.
\end{proof}

\section{Roman domination number}

Roman domination number was defined by Stewart
in \cite{roman-1} and studied further by some researchers, for instance in \cite{roman}. Given a graph $G=(V,E)$, a map $f : V
\rightarrow \{0, 1, 2\}$ is a {\em Roman dominating function} for $G$ if for every vertex $v$ with $f(v) = 0$, there exists a
vertex $u\in N(v)$ such that $f(u) = 2$. The {\em weight} of a Roman
dominating function is given by $f(V) =\sum_{u\in V}f(u)$. The
minimum weight of a Roman dominating function on $G$ is called the
{\em Roman domination number} of $G$ and it is denoted by
$\gamma_R(G)$. A function $f$ is a $\gamma_R(G)$-function in a graph $G=(V,E)$ if it is a Roman dominating function and $f(V)=\gamma_R(G)$.

Let $f$ be a Roman dominating function on $G$ and let $B_0$,
$B_1$ and $B_2$ be the sets of vertices of $G$ induced
by $f$, where $B_i = \{v\in V\;:\; f(v) = i\}$. Frequently, a Roman dominating function $f$ is represented by the sets $B_0$,
$B_1$ and $B_2$, and it is common to denote  $f=(B_0,B_1, B_2)$. It is clear that for any
Roman dominating function $f$ on the graph $G=(V,E)$ of order $n$ we
have that $f(V)=\sum_{u\in V}f(u)=2|B_2|+|B_1|$  and
$|B_2|+|B_1|+|B_0|=n$. The following lemmas will be useful into proving other results in this section.

\begin{lemma}{\em \cite{roman}}\label{lema-roman}
For any graph $G$, $\gamma(G)\le \gamma_R(G)\le 2\gamma(G)$.
\end{lemma}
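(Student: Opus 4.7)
The plan is to prove both inequalities by direct constructions, passing in one direction from a minimum Roman dominating function to a dominating set, and in the other direction from a minimum dominating set to a Roman dominating function whose weight we can control.

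For the lower bound $\gamma(G)\le \gamma_R(G)$, I would start from a $\gamma_R(G)$-function $f=(B_0,B_1,B_2)$ and argue that the set $D:=B_1\cup B_2$ is a dominating set of $G$. Indeed, every vertex of $V\setminus D$ lies in $B_0$, and by the definition of a Roman dominating function it has a neighbor in $B_2\subseteq D$. Therefore $\gamma(G)\le |D|=|B_1|+|B_2|\le |B_1|+2|B_2|=f(V)=\gamma_R(G)$.

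For the upper bound $\gamma_R(G)\le 2\gamma(G)$, I would take a $\gamma(G)$-set $S$ and define the function $g:V\to\{0,1,2\}$ by $g(u)=2$ if $u\in S$ and $g(u)=0$ otherwise. Since $S$ dominates $V$, every vertex $v$ with $g(v)=0$ has at least one neighbor $u\in S$ with $g(u)=2$; hence $g$ is a Roman dominating function. Its weight is $g(V)=2|S|=2\gamma(G)$, so $\gamma_R(G)\le g(V)=2\gamma(G)$.

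Combining the two bounds completes the proof. There is no real obstacle here: both directions are one-line constructions, and the only point to verify carefully is that in the lower bound we genuinely use the weight estimate $|B_1|+|B_2|\le |B_1|+2|B_2|$, which is where the factor of $2$ in the sandwich becomes visible.
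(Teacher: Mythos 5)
Your proof is correct; both constructions (taking $B_1\cup B_2$ as a dominating set for the lower bound, and assigning $2$ to every vertex of a minimum dominating set for the upper bound) are exactly the standard argument of Cockayne et al.\ \cite{roman}, which is the source the paper cites for this lemma without reproducing a proof. Nothing to add.
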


\begin{lemma}\label{lema-romano}
Let $G=(V,E)$ be a graph and let $f=(B_0,B_1,B_2)$ be a $\gamma_R(G)$-function. Then for every $v\in V$,
\begin{itemize}
\item[\rm{(i)}] If $v\in B_0$, then $\gamma_R(G)-1\le \gamma_R(G-v)\le \gamma_R(G)$.
\item[\rm{(ii)}] If $v\in B_1$, then $\gamma_R(G-v)=\gamma_R(G)-1$.
\item[\rm{(iii)}] If $v\in B_2$, then $\gamma_R(G)-1\le \gamma_R(G-v)\le\gamma_R(G)+\delta(v)-2$.
\end{itemize}
\end{lemma}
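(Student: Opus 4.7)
The plan is to prove all six bounds by modifying either $f$ (for upper bounds on $\gamma_R(G-v)$) or a $\gamma_R(G-v)$-function (for the lower bounds). The lower bound $\gamma_R(G-v)\ge \gamma_R(G)-1$ is common to all three items and can be handled with a single construction, so I would dispatch it first.

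For this common lower bound, let $g$ be a $\gamma_R(G-v)$-function and extend it to $V$ by setting $g(v)=1$. Any vertex $u\ne v$ with $g(u)=0$ already has a neighbor $w\ne v$ with $g(w)=2$ in $G-v$, which is still a neighbor in $G$; the vertex $v$ itself has weight $1$. So the extension is a Roman dominating function on $G$ of weight $\gamma_R(G-v)+1$, giving $\gamma_R(G)\le \gamma_R(G-v)+1$, which is the desired inequality in all three cases.

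For the upper bounds in (i) and (ii), I exploit that $v\notin B_2$, so restricting $f$ to $V\setminus\{v\}$ keeps every $B_0$-vertex dominated by a $B_2$-witness different from $v$. Hence the restriction is a Roman dominating function on $G-v$ of weight $\gamma_R(G)-f(v)$. This gives the upper bound $\gamma_R(G)$ in (i) and the upper bound $\gamma_R(G)-1$ in (ii); combined with the common lower bound, (ii) collapses to the equality $\gamma_R(G-v)=\gamma_R(G)-1$.

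The main step is the upper bound in (iii), where $v\in B_2$: removing $v$ may leave the vertices in $N(v)\cap B_0$ undominated. The repair I would use is to assign weight $1$ to every vertex in $N(v)\cap B_0$ and keep $f$ elsewhere. The resulting function on $V\setminus\{v\}$ is Roman dominating, since any vertex that still has value $0$ lies in $B_0\setminus N(v)$ and therefore has a $B_2$-neighbor distinct from $v$. Its weight is
\[
\gamma_R(G)-2+|N(v)\cap B_0|\;\le\;\gamma_R(G)+\delta(v)-2,
\]
using $|N(v)\cap B_0|\le \delta(v)$. The only subtlety to watch is choosing the cheapest local repair: assigning $1$ to each orphaned neighbor costs exactly one per orphan, which together with the $-2$ saved by deleting $v$ matches the stated bound precisely. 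Beyond this bookkeeping, no deeper obstacle is anticipated.
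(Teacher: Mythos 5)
Your proposal is correct and follows essentially the same route as the paper: the common lower bound via extending a $\gamma_R(G-v)$-function by $g(v)=1$, the upper bounds in (i) and (ii) by restricting $f$ (using $v\notin B_2$), and the repair in (iii) by promoting the orphaned neighbors in $N(v)\cap B_0$ to weight $1$, which is exactly the paper's function $(B_0,B_1\cup(N(v)-B_2),B_2-\{v\})$. No further comment is needed.
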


\begin{proof}
Let  $f'=(A_0,A_1,A_2)$ be a $\gamma_R(G-v)$-function. By making $f'(v)=1$ we have that $f'$ is a Roman dominating function in $G$. Thus \begin{equation}\label{lab-1}
\gamma_R(G)\le \gamma_R(G-v)+1.
\end{equation}
Now, if $v\in B_0$, then it is clear that $\gamma_R(G-v)\le \gamma_R(G)$ and (i) is proved.

Moreover, if $v\in B_1$, then $(B_0,B_1-\{v\},B_2)$ is a Roman dominating function in $G-v$. Thus $\gamma_R(G-v)\le \gamma_R(G)-1$. Therefore, by (\ref{lab-1}) we obtain (ii).

On the other hand, if $v\in B_2$, then $(B_0,B_1\cup (N(v)-B_2),B_2-\{v\})$ is a Roman dominating function in $G-v$. Thus
\begin{align*}
\gamma_R(G-v)&\le 2|B_2-\{v\}|+|B_1\cup (N(v)-B_2)|\\
&=2|B_2|-2+|B_1|+|N(v)-B_2|\\
&\le \gamma_R(G)+\delta(v)-2.
\end{align*}
Therefore, (iii) is proved.
\end{proof}

\begin{lemma}\label{lema-romano-1}
Let $G=(V,E)$ be a graph. If for every $\gamma_R(G)$-function $f=(B_0,B_1,B_2)$ is satisfied that $v\in B_0$, then $$\gamma_R(G-v)=\gamma_R(G).$$
\end{lemma}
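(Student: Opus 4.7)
The plan is to combine the upper bound from Lemma \ref{lema-romano}(i) with a contradiction argument for the lower bound. By hypothesis, every $\gamma_R(G)$-function $f=(B_0,B_1,B_2)$ places $v$ in $B_0$. Fix one such $f$; then Lemma \ref{lema-romano}(i) immediately gives
$$\gamma_R(G)-1 \le \gamma_R(G-v) \le \gamma_R(G).$$
So it remains to rule out the possibility that $\gamma_R(G-v) = \gamma_R(G)-1$.

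To do this, I would argue by contradiction: suppose $\gamma_R(G-v) = \gamma_R(G)-1$, and let $g=(A_0,A_1,A_2)$ be a $\gamma_R(G-v)$-function. Define $f'\colon V(G)\to\{0,1,2\}$ by $f'(v)=1$ and $f'(u)=g(u)$ for every $u\neq v$. Since $g$ is Roman dominating on $G-v$, every vertex $u\neq v$ with $f'(u)=0$ still has a neighbor $w\neq v$ with $f'(w)=g(w)=2$; and $v$ itself now has $f'(v)=1$, so the Roman condition is vacuous at $v$. Hence $f'$ is a Roman dominating function on $G$ of weight $\gamma_R(G-v)+1=\gamma_R(G)$, and therefore $f'$ is a $\gamma_R(G)$-function.

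The contradiction is immediate: in $f'$ the vertex $v$ lies in the set $B_1$, not in $B_0$, which violates the hypothesis that $v\in B_0$ for every $\gamma_R(G)$-function. Therefore $\gamma_R(G-v)\neq \gamma_R(G)-1$, and combined with the earlier two-sided bound we conclude $\gamma_R(G-v)=\gamma_R(G)$.

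The argument is short; the only subtle point, and the one I would be careful about while writing it up, is verifying that the extended function $f'$ really is a Roman dominating function on $G$ (i.e., that giving $v$ weight $1$ suffices and does not disturb the Roman condition for any other vertex). Everything else is a direct application of Lemma \ref{lema-romano}(i).
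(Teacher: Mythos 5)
Your proof is correct and follows essentially the same route as the paper: both establish $\gamma_R(G-v)\le\gamma_R(G)$ from Lemma \ref{lema-romano}(i) and then derive a contradiction by extending a $\gamma_R(G-v)$-function to $G$ with value $1$ at $v$, producing a $\gamma_R(G)$-function that does not place $v$ in $B_0$. The verification you flag as the subtle point (that the extension is Roman dominating) goes through exactly as you describe.
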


\begin{proof}
From Lemma \ref{lema-romano} (i) we have that $\gamma_R(G-v)\le \gamma_R(G)$. If $\gamma_R(G-v)<\gamma_R(G)$, then there exists a $\gamma_R(G-v)$-function $h=(A_0,A_1,A_2)$ such that $h(V-\{v\})=\gamma_R(G-v)<\gamma_R(G)$, which leads to $h(V-\{v\})\le \gamma_R(G)-1$. If $h'$ is a function in $G$ such that for every $u\in V$, $u\ne v$, we have that $h'(u)=h(u)$ and $h'(v)=1$, then $h'$ is a Roman dominating function in $G$. Thus, $\gamma_R(G)\le h'(V)=h(V-\{v\})+1\le \gamma_R(G)$. So, $\gamma_R(G)=h'(V)=h(V-\{v\})+1=\gamma_R(G)$ and we have that $h'$ is a $\gamma_R(G)$-function such that $h'(v)=1$, which is a contradiction. Therefore, $\gamma_R(G-v)=\gamma_R(G)$.
\end{proof}

The Roman domination number of rooted product graphs is studied at
next.

\begin{theorem}\label{teo-roman}
Let $G$ be a graph of order $n\ge 2$. Then for any graph $H$ with root $v$ and at least two vertices,
$$n(\gamma_R(H)-1)+\gamma(G)\le \gamma_R(G\circ H)\le n\gamma_R(H).$$
\end{theorem}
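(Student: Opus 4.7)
The plan is to prove the two inequalities separately, both through a ``restriction to a copy'' analysis analogous to the one used in Lemma~\ref{rem1}. For the upper bound, I start from a $\gamma_R(H)$-function $g$ and, on each copy $H_i$, use the translated function $g_i$ obtained by identifying $v$ with $v_i$. Defining $f\colon V(G\circ H)\to\{0,1,2\}$ by $f(u)=g_i(u)$ whenever $u\in V_i$ yields a Roman dominating function on $G\circ H$: every vertex in $V_i\setminus\{v_i\}$ has its neighborhood entirely in $V_i$, so the Roman domination condition on each $H_i$ lifts verbatim to $G\circ H$. The weight is $n\gamma_R(H)$.

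For the lower bound, I fix a $\gamma_R(G\circ H)$-function $f=(B_0,B_1,B_2)$ and, for every $i$, let $f_i$ denote its restriction to $V_i$. I split $\{1,\dots,n\}$ into two sets: let $A$ consist of those indices $i$ for which either $f(v_i)\ge 1$ or some neighbor of $v_i$ inside $H_i$ lies in $B_2$, and let $Y$ be its complement. For $i\in A$, the restriction $f_i$ is already a Roman dominating function on $H_i$, hence $f_i(V_i)\ge \gamma_R(H)$. For $i\in Y$, raising $f_i(v_i)$ from $0$ to $1$ produces a Roman dominating function on $H_i$ (the vertices of $V_i\setminus\{v_i\}$ have all their neighbors in $V_i$, so they are already handled by $f_i$), and consequently $f_i(V_i)+1\ge \gamma_R(H)$.

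The crux is then to observe that $D_G=\{v_i:i\in A\}$ is a dominating set of $G$. Indeed, any $i\in Y$ satisfies $v_i\in B_0$ and has no $B_2$-neighbor inside $H_i$, so $v_i$ must be Roman-dominated from outside, forcing the existence of a $G$-neighbor $v_j$ with $f(v_j)=2$; this $j$ lies in $A$. Combining everything yields
\[
\gamma_R(G\circ H)=\sum_{i=1}^n f_i(V_i)\ge |A|\gamma_R(H)+|Y|(\gamma_R(H)-1)=n\gamma_R(H)-|Y|\ge n(\gamma_R(H)-1)+\gamma(G),
\]
where the last step uses $|Y|=n-|A|\le n-\gamma(G)$. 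I expect the main obstacle to be pinning down the right partition into $A$ and $Y$ and verifying that $D_G$ actually dominates $G$; once this is in place, the weight inequalities are a matter of summation, and no appeal to Lemmas~\ref{lema-roman}--\ref{lema-romano-1} is needed for this particular statement.
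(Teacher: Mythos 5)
Your proof is correct and follows essentially the same route as the paper's: restrict a minimum Roman dominating function to each copy, show each copy carries weight at least $\gamma_R(H)-1$ with the deficiency of $1$ occurring only at copies whose root is served from outside, and observe that the roots served inside their own copies form a dominating set of $G$, giving at most $n-\gamma(G)$ deficient copies. The only cosmetic difference is that you obtain the per-copy bound for a deficient copy by raising the root's value from $0$ to $1$ directly, instead of passing through Lemma \ref{lema-romano} (i) applied to $H-v$ as the paper does; the underlying inequality is the same.
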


\begin{proof}
It is clear that $\gamma_R(G\circ H)\le n\gamma_R(H)$. Let $V_i$ be the set of vertices of $H_i$ for every $i\in \{1,...,n\}$ and let $f=(B_0,B_1,B_2)$ be a $\gamma_R(G\circ H)$-function. Now, for every $i\in \{1,...,n\}$ and every $k\in \{0,1,2\}$, let $B^{(i)}_k=B_k\cap V_i$. Let $j\in \{1,...,n\}$. We consider the following cases.

Case 1: $v_j\in B^{(j)}_0$. If $N_{H_j}(v_j)\cap B^{(j)}_2\ne \emptyset$, then $f_j=(B^{(j)}_0-\{v_j\},B^{(j)}_1,B^{(j)}_2)$ is a Roman dominating function in $H_j-v_j$. On the contrary, if $N_{H_j}(v_j)\cap B^{(j)}_2=\emptyset$, then $v_j$ is adjacent to some vertex $v_k\in B^{(k)}_2$, with $k\ne j$ and, again $f_j=(B^{(j)}_0-\{v_j\},B^{(j)}_1,B^{(j)}_2)$ is a Roman dominating function in $H_j-v_j$. So, $\gamma_R(H_j-v_j)\le 2|B^{(j)}_2|+|B^{(j)}_1|$. By Lemma \ref{lema-romano} (i) we have that $\gamma_R(H_j-v_j)\ge \gamma_R(H)-1$. Thus $2|B^{(j)}_2|+|B^{(j)}_1|\ge \gamma_R(H)-1$.

Case 2: $v_j\in B^{(j)}_1$. Hence, it is clear that $f_j=(B^{(j)}_0,B^{(j)}_1-\{v_j\},B^{(j)}_2)$ is a Roman dominating function in $H_j-v_j$. So, $\gamma_R(H_j-v_j)\le 2|B^{(j)}_2|+|B^{(j)}_1|-1$. By Lemma \ref{lema-romano} (ii) we have that $\gamma_R(H_j-v_j)=\gamma_R(H)-1$. Thus $2|B^{(j)}_2|+|B^{(j)}_1|\ge \gamma_R(H)$.

Case 3: $v_j\in B^{(j)}_2$. Thus $f_j=(B^{(j)}_0,B^{(j)}_1,B^{(j)}_2)$ is a Roman dominating function in $H_j$. So, $2|B^{(j)}_2|+|B^{(j)}_1|\ge \gamma_R(H)$.

Now, let $V$ be the set of vertices of $G$ and let $A\subseteq V\cap B_0$ be the set of vertices of $G$ such that for every vertex $v_l\in A$ is satisfied that $N_{H_l}(v_l)\cap B^{(l)}_2=\emptyset$. So, every vertex $v_l\in A$ is dominated by some vertex in $(V-A)\cap  B^{(k)}_2$, with $k\ne l$. As a consequence, $V-A$ is a dominating set and $\gamma(G)\le n-|A|$. Since $A\subseteq V\cap B_0$, it is satisfied that $|A|$ equals at most the numbers of copies $H_j$ of $H$ such that $2|B^{(j)}_2|+|B^{(j)}_1|\ge \gamma_R(H)-1$ (those copies satisfying Case 1). Thus we have the following,

\begin{align*}
\gamma_R(G\circ H)&=2|B_2|+|B_1|\\
&=\sum_{i=1}^n(2|B^{(i)}_2|+|B^{(i)}_1|)\\
&=\sum_{i=1}^{n-|A|}(2|B^{(i)}_2|+|B^{(i)}_1|)+\sum_{i=1}^{|A|}(2|B^{(i)}_2|+|B^{(i)}_1|)\\
&\ge(n-|A|)\gamma_R(H)+|A|(\gamma_R(H)-1)\\
&=n\gamma_R(H)-|A|\\
&\ge n(\gamma_R(H)-1)+\gamma(G).
\end{align*}
Therefore the lower bound is proved.
\end{proof}

As the following proposition shows, the above bounds are tight.

\begin{theorem}
Let $G$ be a graph of order $n\ge 2$ and let $H$ be a graph with root $v$ and at least two vertices. Then,
\begin{itemize}
\item[\rm{(i)}] If for every $\gamma_R(H)$-function $f=(B_0,B_1,B_2)$ is satisfied that $f(v)=0$, then $$\gamma_R(G\circ H)=n\gamma_R(H).$$
\item[\rm{(ii)}] If there exist two $\gamma_R(H)$-functions $h=(B_0,B_1,B_2)$ and $h'=(B'_0,B'_1,B'_2)$ such that $h(v)=1$ and $h'(v)=2$, then $$\gamma_R(G\circ H)=n(\gamma_R(H)-1)+\gamma(G).$$
\end{itemize}
\end{theorem}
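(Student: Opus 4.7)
The plan is to handle the two parts separately, using what has already been established in Theorem~\ref{teo-roman} to supply one of the two required inequalities in each case, and then closing the gap with a careful argument.

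For part (i), the upper bound $\gamma_R(G\circ H)\le n\gamma_R(H)$ is immediate from Theorem~\ref{teo-roman}, so it suffices to prove $\gamma_R(G\circ H)\ge n\gamma_R(H)$. The plan is to revisit the proof of the lower bound in Theorem~\ref{teo-roman}. The only place where a copy $H_j$ contributes less than $\gamma_R(H)$ to the total weight is Case~1 (when $v_j\in B_0^{(j)}$), and there the inequality $2|B_2^{(j)}|+|B_1^{(j)}|\ge\gamma_R(H)-1$ came from the fact that the restriction $f_j=(B_0^{(j)}-\{v_j\},B_1^{(j)}, B_2^{(j)})$ is a Roman dominating function of $H_j-v_j$ together with the generic bound $\gamma_R(H-v)\ge\gamma_R(H)-1$ of Lemma~\ref{lema-romano}(i). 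But under the hypothesis of (i), every $\gamma_R(H)$-function has $v\in B_0$, so Lemma~\ref{lema-romano-1} sharpens this to $\gamma_R(H-v)=\gamma_R(H)$. Hence in Case~1 we in fact get $2|B_2^{(j)}|+|B_1^{(j)}|\ge\gamma_R(H)$. Summing over the $n$ copies yields $\gamma_R(G\circ H)\ge n\gamma_R(H)$, and the equality follows.

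For part (ii), the lower bound $\gamma_R(G\circ H)\ge n(\gamma_R(H)-1)+\gamma(G)$ is supplied by Theorem~\ref{teo-roman}. The core task is to produce a Roman dominating function on $G\circ H$ of weight exactly $n(\gamma_R(H)-1)+\gamma(G)$. Let $D$ be a $\gamma(G)$-set, and let $h=(B_0,B_1,B_2)$ and $h'=(B_0',B_1',B_2')$ be the two $\gamma_R(H)$-functions given by the hypothesis, with $h(v)=1$ and $h'(v)=2$. The plan is to define a function $g$ on $V(G\circ H)$ as follows: on each copy $H_j$ with $v_j\in D$ apply $h'$ (so in particular $g(v_j)=2$); on each copy $H_i$ with $v_i\notin D$ apply $h$ but then overwrite $g(v_i):=0$ (so the weight on that copy drops from $\gamma_R(H)$ to $\gamma_R(H)-1$, since $h(v)=1$). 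The total weight is then
\[
|D|\,\gamma_R(H)+(n-|D|)(\gamma_R(H)-1)=n(\gamma_R(H)-1)+\gamma(G),
\]
as required.

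The only genuine verification is that $g$ really is a Roman dominating function on $G\circ H$. On copies $H_j$ with $v_j\in D$ there is nothing to check because $g$ agrees with the Roman dominating function $h'$ of $H$. On a copy $H_i$ with $v_i\notin D$, every vertex $u\ne v_i$ of $H_i$ with $g(u)=0$ had, under $h$, a neighbor labelled $2$ inside $H$; since $h(v)=1\ne 2$ that neighbor was not $v$, hence is some vertex of $H_i$ different from $v_i$, and $g$ still labels it $2$. So such $u$ remains Roman dominated. The only new vertex of weight $0$ is $v_i$ itself, and here the hypothesis $v_i\notin D$ together with $D$ being a dominating set of $G$ produces a neighbor $v_j\in D$ of $v_i$ in $G$; this $v_j$ is adjacent to $v_i$ in $G\circ H$ and satisfies $g(v_j)=h'(v)=2$, so $v_i$ is Roman dominated as well. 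This is the step I expect to require the most care: one must juggle the two functions $h$ and $h'$ on different copies and use the precise form of the rooted product to ensure that the modification $g(v_i):=0$ does not break Roman domination. Once this is checked, the construction establishes the reverse inequality and hence the claimed equality.
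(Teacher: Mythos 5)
Your proposal is correct and follows essentially the same route as the paper: part (ii) is the identical construction (apply $h'$ on copies rooted in a $\gamma(G)$-set, apply $h$ elsewhere with the root reset to $0$), and part (i) rests on the same key facts, namely that the restriction of a $\gamma_R(G\circ H)$-function to $H_j-v_j$ is a Roman dominating function and that Lemma \ref{lema-romano-1} forces $\gamma_R(H-v)=\gamma_R(H)$. The only cosmetic difference is that you obtain (i) by sharpening the per-copy bound in the proof of Theorem \ref{teo-roman} and summing directly, whereas the paper argues by contradiction from a copy of weight less than $\gamma_R(H)$.
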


\begin{proof}
Let $f'=(B'_0,B'_1,B'_2)$ be a $\gamma_R(G\circ H)$-function and let $V_i$ be the set of vertices of $H_i$, $i\in \{1,...,n\}$. Now, for every $i\in \{1,...,n\}$, let $f_i=(B^{(i)}_0=B'_0\cap V_i,B^{(i)}_1=B'_1\cap V_i,B^{(i)}_2=B'_2\cap V_i)$. From Theorem \ref{teo-roman} we have that $\gamma_R(G\circ H)\le n\gamma_R(H)$. If $\gamma_R(G\circ H)<n\gamma_R(H)$, then there exists $j\in \{1,...,n\}$ such that $f_j(V_j)=2|B^{(j)}_2|+|B^{(j)}_1|<\gamma_R(H)$. So $f_j=(B^{(j)}_0,B^{(j)}_1,B^{(j)}_2)$ is not a Roman dominating function in $H_j$. If $f'(v_j)=1$ or $f'(v_j)=2$, then every vertex in $B^{(j)}_0$ is adjacent to a vertex in $B^{(j)}_2$ and, as a consequence, $(B^{(j)}_0,B^{(j)}_1,B^{(j)}_2)$ is a Roman dominating function in $H_j$, which is a contradiction. So $f'(v_j)=0$ and $f_j=(B^{(j)}_0-\{v_j\},B^{(j)}_1,B^{(j)}_2)$ is a $\gamma_R(H_j-v_j)$-function. Since $f(v)=0$ for every $\gamma_R(H)$-function, by Lemma \ref{lema-romano-1} we have that $2|B^{(j)}_2|+|B^{(j)}_1|=\gamma_R(H-v)=\gamma_R(H)$ and this is a contradiction. Therefore, $\gamma_R(G\circ H)=n\gamma_R(H)$ and (i) is proved.

To prove (ii), for every $i\in \{1,...,n\}$ we consider two $\gamma_R(H_i)$-functions $h_i=(A^{(i)}_0,A^{(i)}_1,A^{(i)}_2)$ and $h'_i=(B^{(i)}_0,B^{(i)}_1,B^{(i)}_2)$ such that $h_i(v_i)=1$ and $h'_i(v)=2$, and let $S$ be a $\gamma(G)$-set. Now, we define a function $g$ in $G\circ H$ in the following way.
\begin{itemize}
\item For every vertex $x$ belonging to a copy $H_j$ of $H$ such that the root $v_j\in S$ we make $g(x)=h'(x)$ (notice that $g(v_j)=2$).
\item For every vertex $y$, except the corresponding root, belonging to a copy $H_l$ of $H$ such that the root $v_l\notin S$, we make $g(x)=h(x)$.
\item For every root of every copy $H_l$ satisfying the conditions of the above item we make $g(x)=0$ (note that these vertices are adjacent to a vertex $w$ of $G$ for which $g(w)=2$).
\end{itemize}
Since every vertex $u\in V_j$ not in $G$, with $g(u)=0$, is adjacent to a vertex $u'$ such that $g(u')=2$ and also, every vertex $v_l$ of $G$, with $g(v_l)=0$, is adjacent to a vertex $v_k\in S$ with $g(v_k)=2$, we obtain that $g$ is a Roman dominating function in $G\circ H$. Thus
\begin{align*}
\gamma_R(G\circ H)&\le \sum_{i=1}^{|S|}(2|B^{(i)}_2|+|B^{(i)}_1|)+\sum_{i=1}^{n-|S|}(2|A^{(i)}_2|+|A^{(i)}_1|-1)\\
&=|S|\gamma_R(H)+(n-|S|)(\gamma_R(H)-1)\\
&=n(\gamma_R(H)-1)+|S|\\
&=\gamma(G)+n(\gamma_R(H)-1).
\end{align*}
Therefore, (ii) follows by Theorem \ref{teo-roman}.
\end{proof}

On the other hand, we can see that there are rooted product graphs for which the bounds of Theorem \ref{teo-roman} are not achieved.

\begin{theorem}
Let $G$ be a graph of order $n\ge 2$ and let $H$ be a graph with root $v$ and at least two vertices. If for every $\gamma_R(H)$-function $f$ is satisfied that $f(v)=1$, then
$$\gamma_R(G\circ H)=n(\gamma_R(H)-1)+\gamma_R(G).$$
\end{theorem}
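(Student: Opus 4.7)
The equality will come from proving the two inequalities separately. For the upper bound $\gamma_R(G\circ H)\le n(\gamma_R(H)-1)+\gamma_R(G)$ the plan is to combine a $\gamma_R(G)$-function on $G$ with Roman dominating functions of weight $\gamma_R(H)-1$ on the punctured copies $H_i-v_i$. These exist because the hypothesis supplies a $\gamma_R(H)$-function with $f(v)=1$ and hence, by Lemma~\ref{lema-romano}(ii), $\gamma_R(H-v)=\gamma_R(H)-1$. Given a $\gamma_R(G)$-function $f=(C_0,C_1,C_2)$ and, for each $i$, a $\gamma_R(H_i-v_i)$-function $g_i$, I would set $F(v_i)=f(v_i)$ and $F(u)=g_i(u)$ for every $u\in V_i\setminus\{v_i\}$. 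Checking that $F$ is a Roman dominating function is routine: a non-root vertex labelled $0$ is dominated inside its own copy by $g_i$ (its $G\circ H$-neighbours lie in $V_i$), and a root labelled $0$ is dominated by a $G$-neighbour of $f$-label $2$. The total weight is $\gamma_R(G)+n(\gamma_R(H)-1)$.

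For the lower bound I would fix a $\gamma_R(G\circ H)$-function $F=(B_0,B_1,B_2)$, write $w_j=2|B_2\cap V_j|+|B_1\cap V_j|$, and partition $V(G)$ into classes $V_2,V_1,V_0',V_0$ according to whether $F(v_j)=2$, $F(v_j)=1$, or $F(v_j)=0$ with, respectively without, a $B_2$-neighbour inside $V_j$. Arguments parallel to the proof of Theorem~\ref{teo-roman} give the copy-wise bounds $w_j\ge\gamma_R(H)-1$ on $V_0$ (restrict $F$ to $H_j-v_j$) and $w_j\ge\gamma_R(H)$ on $V_0'\cup V_1\cup V_2$ (restrict $F$ to $H_j$). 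The essential use of the hypothesis is the following upgrade: when $v_j\in V_0'\cup V_2$ the restriction of $F$ to $V_j$ is a Roman dominating function on $H_j$ assigning $v_j$ a value in $\{0,2\}$; if $w_j$ equalled $\gamma_R(H)$ this restriction would be a $\gamma_R(H)$-function with value different from $1$ at $v$, contradicting the hypothesis. Hence $w_j\ge\gamma_R(H)+1$ on $V_0'\cup V_2$.

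Summing the four copy-wise bounds gives
\[
\gamma_R(G\circ H)=\sum_{j=1}^{n}w_j\ge n(\gamma_R(H)-1)+|V_0'|+|V_1|+2|V_2|,
\]
and it remains to recognise the right-hand correction as at least $\gamma_R(G)$. For this I would define $g\colon V(G)\to\{0,1,2\}$ by $g(v_j)=2$ on $V_2$, $g(v_j)=1$ on $V_0'\cup V_1$, and $g(v_j)=0$ on $V_0$. Vertices of $V_0$ are by definition dominated in $G\circ H$ by a vertex outside their own copy, hence by a $B_2$-vertex of $G$ lying in $V_2$, which receives $g$-value $2$; thus $g$ is a Roman dominating function on $G$ of weight $|V_0'|+|V_1|+2|V_2|\ge\gamma_R(G)$, which completes the lower bound.

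The main obstacle is the refinement on the classes $V_0'\cup V_2$: this is the only step that uses the hypothesis, and it is precisely what upgrades the $\gamma(G)$-term in Theorem~\ref{teo-roman} to the stronger $\gamma_R(G)$-term claimed here. Without this strict inequality, copies with $v_j\in B_0\cup B_2$ would only yield $w_j\ge\gamma_R(H)$, which is insufficient to extract, out of the slack, a Roman (rather than merely an ordinary) dominating function on $G$.
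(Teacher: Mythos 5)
Your proof is correct and follows essentially the same route as the paper: the same upper-bound construction (a $\gamma_R(G)$-function on the roots glued to weight-$(\gamma_R(H)-1)$ Roman dominating functions on the punctured copies, justified via Lemma~\ref{lema-romano}~(ii)), and the same lower-bound scheme of classifying copies by the value of the minimum function at the root, using the hypothesis to gain $+1$ on copies whose root is labelled $0$-with-internal-domination or $2$, and reading off a Roman dominating function of $G$ from the root labels. The only local difference is that for the class $V_0'$ you prove $w_j\ge\gamma_R(H)+1$ by direct counting, whereas the paper eliminates that class altogether by an exchange argument against the minimality of the global function; both work (and your displayed sum could carry $2|V_0'|$ rather than $|V_0'|$, which only strengthens the inequality).
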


\begin{proof}
Let $f=(B_0,B_1,B_2)$ be a $\gamma_R(H)$-function and let $f'=(B'_0,B'_1,B'_2)$ be a $\gamma_R(G)$-function. Now, let us define a function $h$ in $G\circ H$ such that if $u\ne v$, then $h(u)=f(u).$ Otherwise, $h(u)=f'(u)$. Since $f(v)=1$ for every $\gamma_R(H)$-function, it is satisfied that every vertex $x$ of $G\circ H$ with $h(x)=0$ is adjacent to a vertex $y$ in $G\circ H$ with $h(y)=2$. Thus $h$ is a Roman dominating function in $G\circ H$ and we have that
\begin{align*}
\gamma_R(G\circ H)&\le (2|B'_2|+|B'_1|)+\sum_{i=1}^n (2|B_2|+|B_1|-1)\\
&=n(\gamma_R(H)-1)+\gamma_R(G).
\end{align*}
On the other hand, let $V_i$, $i\in \{1,...,n\}$, be the set of vertices of the copy $H_i$ of $H$ in $G\circ H$ and let $V$ be the set of vertices of $G$. Now, let $g=(A_0,A_1,A_2)$ be a $\gamma_R(G\circ H)$-function and for every $i\in \{1,...,n\}$ let $g_i=(A^{(i)}_0=A_0\cap V_i,A^{(i)}_1=A_1\cap V_i,A^{(i)}_2=A_2\cap V_i)$. Since the root $v_i$ of $H_i$ satisfies that $f(v_i)=1$ for every $\gamma_R(H_i)$-function $f$, we have the following cases.

Case 1: If there exists $l\in \{1,...,n\}$ such that $g(v_l)=2$, then $g_l$ is a Roman dominating function in $H_l$, but it is not a $\gamma_R(H)$-function. Thus $\gamma_R(H_l)<2|A^{(l)}_2|+|A^{(l)}_1|$, which leads to $$\gamma_R(H_l)\le 2|A^{(l)}_2|+|A^{(l)}_1|-1=2|A^{(l)}_2-\{v_l\}|+|A^{(l)}_1|+1.$$

Case 2: If there exists $j\in \{1,...,n\}$ such that $g(v_j)=1$, then $g_j$ is a Roman dominating function in $H_j$ and $g'_j=(A^{(j)}_0,A^{(j)}_1-\{v_j\},A^{(j)}_2)$ is a Roman dominating function in $H_j-v_j$. Thus, by Lemma \ref{lema-romano} (ii), it is satisfied that $$\gamma_R(H_j)=\gamma_R(H_j-v_j)+1\le 2|A^{(j)}_2|+|A^{(j)}_1-\{v_j\}|+1.$$

Case 3: If there exists $i\in \{1,...,n\}$ such that $g_i(v_i)=0$, then we have one of the following possibilities:

\begin{itemize}
\item $g_i$ is not a Roman dominating function in $H_i$. So, $v_i$ should be adjacent to a vertex $v_j$, $j\ne i$, of $G$ such that $g_j(v_j)=2$. Moreover, $g'_i=(A^{(i)}_0-\{v_i\},A^{(i)}_1,A^{(i)}_2)$ is a Roman dominating function in $H_i-v_i$ and by Lemma \ref{lema-romano} (ii) it is satisfied that $\gamma_R(H_i)=\gamma_R(H_i-v_i)+1\le 2|A^{(i)}_2|+|A^{(i)}_1|+1$.

\item $g_i$ is a Roman dominating function in $H_i$. Since $f(v_i)=1$ for every $\gamma_R(H_i)$-function $f$, we have that $g_i(V_i)>\gamma_R(H_i)$. Let $f_i$ be a $\gamma_R(H_i)$-function. Now, by taking a function $g'$ on $G\circ H$, such that if $v\in V_i$, then $g'(v)=f'(v)$ and if $v\notin V_i$, then $g'(v)=g(v)$, we obtain that $g'$ is a Roman dominating function for $G\circ H$ and the weight of $g'$ is given by
\begin{align*}
g'\left(\bigcup_{j=1}^{n}V_j\right)&=g\left(\bigcup_{j=1,j\ne i}^{n}V_j\right)+f_i(V_i)\\
&=g\left(\bigcup_{j=1,j\ne i}^{n}V_j\right)+\gamma_R(H_i)\\
&<g\left(\bigcup_{j=1,j\ne i}^{n}V_j\right)+g_i(V_i)\\
&=g\left(\bigcup_{j=1}^{n}V_j\right)\\
&=\gamma_R(G\circ H).
\end{align*}
    and this is a contradiction.
\end{itemize}
As a consequence, we obtain that if $g_i(v_i)=0$, then $g_i$ is not a Roman dominating function in $H_i$. So, every vertex $v_l$ of $G$ for which $g(v_l)=0$ is adjacent to a vertex $v_k$, $k\ne l$, of $G$ such that $g(v_k)=2$ and it is satisfied that the function $g'=(X_0=A_0\cap V,X_1=A_1\cap V,X_2=A_2\cap V)$ is a Roman dominating function in $G$ and $\gamma_R(G)\le 2|X_2|+|X_1|$. Thus we have the following,

\begin{align*}
\gamma_R(G\circ H)&=2|A_2|+|A_1|\\
&=\sum_{v_i\in X_0}(2|A^{(i)}_2|+|A^{(i)}_1|)+\sum_{v_i\in X_1}(2|A^{(i)}_2|+|A^{(i)}_1|)+\sum_{v_i\in X_2}(2|A^{(i)}_2|+|A^{(i)}_1|)\\
&=\sum_{v_i\in X_0}(2|A^{(i)}_2|+|A^{(i)}_1|)+\sum_{v_i\in X_1}(2|A^{(i)}_2|+|A^{(i)}_1-\{v_i\}|)+\\
&\hspace*{0.6cm}+\sum_{v_i\in X_2}(2|A^{(i)}_2-\{v_i\}|+|A^{(i)}_1|)+|X_1|+2|X_2|\\
&\ge \sum_{v_i\in X_0}(\gamma_R(H_i)-1)+\sum_{v_i\in X_1}(\gamma_R(H_i)-1)+\sum_{v_i\in X_2}(\gamma_R(H_i)-1)+2|X_2|+|X_1|\\
&\ge \sum_{i=1}^n(\gamma_R(H_i)-1)+\gamma_R(G)\\
&=n(\gamma_R(H)-1)+\gamma_R(G).
\end{align*}
Therefore the result follows.
\end{proof}

\section{Independent domination number}

A set of vertices $S$ of a graph $G$ is {\em independent} if the subgraph induced by $S$ has no edges. The maximum cardinality of an independent set in $G$ is called the {\em independence number} of $G$ and it is denoted by $\alpha(G)$. A set $S$ is a $\alpha(G)$-set if it is independent and $|S|=\alpha(G)$.
A set of vertices $D$ of a graph $G$ is an {\em independent dominating set} in $G$ if $D$ is a dominating set and the subgraph $\langle D\rangle$ induced by $D$ is independent in $G$ \cite{berge}. The minimum cardinality of any independent dominating set in $G$ is called the {\em independent domination
number} of $G$ and it is denoted by $i(G)$. A set $D$ is a $i(G)$-set if it is an independent dominating set and $|D|=i(G)$. At next we study the independent domination number of rooted product graphs and we begin by studying the independence number.

\begin{lemma}\label{lema-independence}
Let $v$ be any vertex of a graph $G$. If $v$ belongs to every $\alpha(G)$-set, then
$\alpha(G)\ge \alpha(G-v)+1$.
\end{lemma}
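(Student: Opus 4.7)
The plan is to contrast a maximum independent set of $G-v$ with the hypothesis that every $\alpha(G)$-set must contain $v$. The key observation is that independence is preserved when we pass from $G-v$ back to $G$: any set $S\subseteq V(G)\setminus\{v\}$ that induces no edges in $G-v$ also induces no edges in $G$, since the two induced subgraphs $\langle S\rangle$ are literally the same.

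First, I would fix an $\alpha(G-v)$-set $S$. By the observation above, $S$ is an independent set in $G$ (and $v\notin S$), so $\alpha(G)\ge |S|=\alpha(G-v)$. Next, I would rule out equality: if $\alpha(G)=\alpha(G-v)=|S|$, then $S$ itself would be an $\alpha(G)$-set. But $v\notin S$, contradicting the hypothesis that $v$ belongs to every $\alpha(G)$-set. Therefore the inequality must be strict, giving $\alpha(G)\ge \alpha(G-v)+1$.

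There is no real obstacle here; the lemma is essentially a one-line consequence of the fact that deleting a vertex cannot enlarge the independence number by more than one, combined with the hypothesis which forbids the ``no change'' case. The only point worth being explicit about is why the $\alpha(G-v)$-set $S$ is still independent in $G$, which holds simply because $v$ is not in $S$ and so no edges incident to $v$ can be edges of $\langle S\rangle$ in $G$.
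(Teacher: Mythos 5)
Your argument is correct and is essentially identical to the paper's proof: both take a maximum independent set $S$ of $G-v$, note it remains independent in $G$ so $\alpha(G)\ge|S|$, and rule out equality because $S$ would then be an $\alpha(G)$-set avoiding $v$. Nothing further is needed.
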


\begin{proof}
Let $S$ be a $\alpha(G-v)$-set. Since $S$ is still independent in $G$, we have $\alpha(G)\ge |S|$. If $\alpha(G)=|S|$, then $S$ is a $\alpha(G)$-set and $v\notin S$, a contradiction. So, $\alpha(G)\ge \alpha(G-v)+1$.
\end{proof}

\begin{theorem}
For any graph $G$ of order $n\ge 2$ and any graph $H$ with root $v$ and at least two vertices,
\begin{itemize}
\item[\rm{(i)}] If there is a $\alpha(H)$-set not containing the root $v$, then $\alpha(G\circ H)=n\alpha(H)$.
\item[\rm{(ii)}] If the root $v$ belongs to every $\alpha(H)$-set, then $\alpha(G\circ H)=n(\alpha(H)-1)+\alpha(G)$.
\end{itemize}
\end{theorem}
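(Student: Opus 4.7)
The plan is to prove both equalities by establishing matching upper and lower bounds, using the decomposition $V(G\circ H)=\bigcup_{i=1}^{n}V_i$ together with the fact that the only edges of $G\circ H$ joining vertices from distinct copies $H_i,H_j$ are the edges of $G$ between the roots $v_i$ and $v_j$.

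For \emph{the upper bound common to both cases}, I would let $A$ be an $\alpha(G\circ H)$-set and put $A_i=A\cap V_i$. Each $A_i$ is independent in $H_i$, hence $|A_i|\le \alpha(H)$. If $v_i\notin A_i$, then $A_i$ is in fact independent in $H_i-v_i$, so $|A_i|\le \alpha(H_i-v_i)$. In case (i) this already gives $|A|\le n\alpha(H)$. For (ii), Lemma \ref{lema-independence} combined with the observation that deleting $v$ from an $\alpha(H)$-set produces an independent set in $H-v$ yields $\alpha(H-v)=\alpha(H)-1$. Moreover, the roots $v_i$ lying in $A$ form an independent set in $G$ (the $G$-edges between copies are precisely the edges among roots), so their number $y$ satisfies $y\le \alpha(G)$. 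Splitting the sum,
\begin{align*}
|A|&=\sum_{v_i\in A}|A_i|+\sum_{v_i\notin A}|A_i|\le y\alpha(H)+(n-y)(\alpha(H)-1)\\
&=n(\alpha(H)-1)+y\le n(\alpha(H)-1)+\alpha(G).
\end{align*}

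For \emph{the lower bounds}, I would build explicit independent sets. In case (i), take an $\alpha(H)$-set $S$ with $v\notin S$, and let $S_i\subseteq V_i$ be its copy in $H_i$; then $\bigcup_{i=1}^{n} S_i$ contains no root, so it induces no $G$-edge, and each $S_i$ is independent inside $H_i$, giving an independent set of size $n\alpha(H)$. In case (ii), fix an $\alpha(G)$-set $T\subseteq V(G)$; for each $v_i\in T$ choose an $\alpha(H_i)$-set $S_i$ (it must contain $v_i$ by hypothesis), and for each $v_j\notin T$ choose an $\alpha(H_j-v_j)$-set $S_j$, which has size $\alpha(H)-1$ and avoids $v_j$. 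Then $\bigcup_i S_i$ is independent in $G\circ H$: inside each copy this is clear, while the only roots that belong to the union are exactly those in $T$, which is independent in $G$. Its cardinality is $|T|\alpha(H)+(n-|T|)(\alpha(H)-1)=n(\alpha(H)-1)+\alpha(G)$.

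I do not expect a substantial obstacle; the only subtle point is the identity $\alpha(H-v)=\alpha(H)-1$ in case (ii), which requires one direction from Lemma \ref{lema-independence} and the other direction from simply removing $v$ from a necessarily $v$-containing $\alpha(H)$-set. Everything else reduces to the structural remark that inter-copy adjacencies in $G\circ H$ occur only among roots, which makes both the bounding argument and the explicit constructions transparent.
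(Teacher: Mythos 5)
Your proposal is correct and follows essentially the same route as the paper: the same explicit independent-set constructions for the lower bounds (copies of a root-avoiding $\alpha(H)$-set in case (i); an $\alpha(G)$-set of roots together with the non-root parts of maximum independent sets of the copies in case (ii)), and the same use of Lemma \ref{lema-independence} to get $\alpha(H-v)\le\alpha(H)-1$ for the upper bound in case (ii). The only cosmetic difference is that the paper's upper-bound bookkeeping separates all roots into one set $Y$ and bounds each $|X\cap(V_i-\{v_i\})|$ by $\alpha(H_i)-1$, whereas you split the copies according to whether their root lies in the independent set; both give the identical estimate.
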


\begin{proof}
Let $S_i$, $i\in \{1,...,n\}$, be a $\alpha(H_i)$-set not containing the root $v_i$. Hence, $\bigcup_{i=1}^nS_i$ is independent in $G\circ H$. Thus $\alpha(G\circ H)\ge n\alpha(H)$. If $\alpha(G\circ H)>n\alpha(H)$, then there exists $j\in \{1,...,n\}$ such that $|S_j|>\alpha(H)$ and $S_j$ is independent, a contradiction. Therefore, $\alpha(G\circ H)=n\alpha(H)$.

On the other hand, suppose the root $v$ belongs to every $\alpha(H)$-set. Let $A_i$ be a $\alpha(H_i)$-set and let $B$ be a $\alpha(G)$-set. Since $v_i\in A_i$ for every $i\in \{1,...,n\}$, by taking $A=B\cup (\bigcup_{i=1}^n A_i-\{v_i\})$ we have that $A$ is independent in $G\circ H$. Thus
$$\alpha(G\circ H)\ge |A|=|B|+\sum_{i=1}^n|A_i-\{v_i\}|=n(\alpha(H)-1)+\alpha(G).$$
Now, let $V_i$, $i\in \{1,...,n\}$, be the set of vertices of the copy $H_i$ of $H$ in $G\circ H$ and let $V$ be the set of vertices of $G$. Let $X$ be a $\alpha(G\circ H)$-set and let $X_i=X\cap (V_i-\{v_i\})$ for every $i\in \{1,...,n\}$ and let $Y=V\cap X$. Notice that $Y$ and $X_i$ are independent sets. So, $\alpha(H_i-v_i)\ge |X_i|$ and $\alpha(G)\ge |Y|$ and by Lemma \ref{lema-independence} we have that $|X_i|\le \alpha(H_i)-1$. Thus
$$\alpha(G\circ H)=|Y|+\sum_{i=1}^n|X_i|\le \alpha(G)+\sum_{i=1}^n(\alpha(H_i)-1)=\alpha(G)+n(\alpha(H)-1).$$
Therefore, the proof is complete.
\end{proof}

\begin{lemma}\label{lema-indep-1}
Let $G=(V,E)$ be a graph. Then for every set of vertices $A\subset V$, $$i(G-A)\ge i(G)-|A|.$$
\end{lemma}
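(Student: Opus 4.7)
The plan is to prove the contrapositive form, namely $i(G) \le i(G-A) + |A|$, by building an explicit independent dominating set of $G$ that is essentially a minimum independent dominating set of $G-A$ plus a few vertices of $A$.

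First I would fix an $i(G-A)$-set $D'$; by definition $D'\subseteq V\setminus A$, the set $D'$ is independent in $G-A$, and it dominates every vertex of $(V\setminus A)\setminus D'$ within $G-A$. Since $D'\subseteq V\setminus A$, no edge between two vertices of $D'$ can be affected by the removal of $A$, so $D'$ is still independent in $G$, and moreover $D'$ still dominates every vertex of $V\setminus A$ in $G$ (adding back the vertices of $A$ only creates new neighbours, it does not remove old ones). Thus the only vertices of $G$ that might fail to be dominated by $D'$ lie in $A$.

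Next I would extend $D'$ to a maximal independent set $D$ of $G$ (such an extension exists by a standard greedy argument). Recall the classical fact that a maximal independent set is automatically an independent dominating set, so $D$ is an independent dominating set of $G$ and hence $i(G)\le |D|$.

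The key step is to show that $D\setminus D'\subseteq A$, from which $|D|\le |D'|+|A|=i(G-A)+|A|$ follows immediately. So suppose $v\in V\setminus A$ with $v\notin D'$; I must show $v\notin D$. Since $D'$ dominates $v$ in $G-A$, there is some $u\in D'$ adjacent to $v$ in $G-A$, hence in $G$. Therefore $v$ has a neighbour in $D'\subseteq D$, and so $v$ cannot belong to the independent set $D$. This forces $D\setminus D'\subseteq A$ as required, and the chain
\begin{equation*}
i(G)\ \le\ |D|\ =\ |D'|+|D\setminus D'|\ \le\ i(G-A)+|A|
\end{equation*}
yields $i(G-A)\ge i(G)-|A|$.

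I do not expect any serious obstacle here; the proof is essentially a one-line greedy extension argument. The only subtle point to verify carefully is the observation that vertices of $V\setminus A$ outside $D'$ have a neighbour in $D'$ in the whole graph $G$ (not just in $G-A$), which is immediate because $D'\subseteq V\setminus A$ so edges from $D'$ to $V\setminus A$ are preserved when $A$ is reinserted.
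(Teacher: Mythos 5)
Your proof is correct and uses essentially the same idea as the paper: both arguments extend a minimum independent dominating set of $G-A$ by at most $|A|$ vertices (all necessarily taken from $A$, since every vertex of $V\setminus A$ outside the set already has a neighbour in it) to obtain an independent dominating set of $G$. The only cosmetic difference is that you argue directly via a maximal independent extension, whereas the paper phrases the same greedy augmentation as a proof by contradiction.
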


\begin{proof}
Let us suppose $i(G-A)<i(G)-|A|$. So, there exists an independent dominating set $S\subset V-A$ in $G-A$ such that $|S|<i(G)-|A|$. Let $v\in A$. If $N_S(v)\ne \emptyset$, then $v$ is independently dominated by the set $S$ in $G$. On the contrary, if $N_S(v)=\emptyset$, then the set $S\cup \{v\}$  is still independent. So, by adding those vertices which maintain the independence in the set $S$ we obtain a set $S'$ which is independent and dominating in $G$ and we have that $i(G)\le |S'|\le |S|+|A|<i(G)-|A|+|A|=i(G)$, which is a contradiction. Therefore, $i(G-A)\ge i(G)-|A|$.
\end{proof}

\begin{lemma}\label{lemma_G-v_id}
If $v$ does not belong to any $i(G)$-set, then
$$i(G-v) = i(G).$$
\end{lemma}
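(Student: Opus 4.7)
The plan is to establish $i(G-v) = i(G)$ by proving the two inequalities $i(G-v) \le i(G)$ and $i(G-v) \ge i(G)$ separately, using the hypothesis in an essential way for the second one. Lemma \ref{lema-indep-1} with $A = \{v\}$ only yields $i(G-v) \ge i(G) - 1$, so it is not enough on its own; what makes the sharper lower bound work is precisely that $v$ is forbidden from lying in any $i(G)$-set.

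For the upper bound, I would take any $i(G)$-set $S$. By hypothesis $v \notin S$, so $S \subseteq V(G-v)$ and $S$ remains independent in $G-v$. Since $S$ dominates every vertex of $V \setminus S$ in $G$, it in particular dominates every vertex of $V(G-v) \setminus S$ in $G-v$. Hence $S$ is an independent dominating set of $G-v$, yielding $i(G-v) \le |S| = i(G)$.

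For the lower bound I would argue by contradiction, assuming $i(G-v) < i(G)$ and picking an $i(G-v)$-set $T$ with $|T| < i(G)$. The set $T$ is still independent in $G$ since $G-v$ is a subgraph. I would then split into two cases according to whether $v$ has a neighbor in $T$. If $N_T(v) \ne \emptyset$, then $T$ already dominates $v$ in $G$, so $T$ itself is an independent dominating set of $G$ with $|T| < i(G)$, a contradiction. If $N_T(v) = \emptyset$, then $T \cup \{v\}$ is independent in $G$ and clearly dominates $G$, so $i(G) \le |T|+1$; combined with $|T| < i(G)$ this forces $|T|+1 = i(G)$, meaning $T \cup \{v\}$ is an $i(G)$-set containing $v$, which contradicts the hypothesis.

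The only subtle step is the second case of the lower bound: one needs to observe that adding $v$ preserves independence exactly when $v$ has no neighbor in $T$, and that this produces a minimum independent dominating set containing $v$, which is where the hypothesis is used. Everything else is essentially bookkeeping.
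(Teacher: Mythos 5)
Your proof is correct and follows essentially the same route as the paper's: the upper bound from an $i(G)$-set avoiding $v$, and the lower bound by contradiction with the same case split on whether $N_T(v)$ is empty, where the empty case produces a minimum independent dominating set containing $v$. No gaps.
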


\begin{proof}
Let $S$ be an $i(G)$-set. Since $v\notin S$, $S$ is still independent and dominating in $G-v$. So, $i(G-v)\le i(G)$. On the other hand, let $A$ be an $i(G-v)$-set. Let us suppose that $|A| < i(G)$. So, $|A|\le i(G)-1$. If $N_{A}(v) = \emptyset$ in $G$, then $A\cup \{v\}$ is independent and dominating in $G$. So, $i(G)\le |A\cup \{v\}|=|A|+1\le i(G)$. Thus $|A+\{v\}|=i(G)$ and this is a contradiction because $v$ does not belong to any $i(G)$-set. On the contrary, if $N_{A}(v) \ne \emptyset$, then $A$ is independent and dominating in $G$, which is a contradiction ($|A|<i(G)$). So, $|A|\ge i(G)$. Therefore, $i(G-v) = |A| \ge i(G)$ and the result follows.
\end{proof}

\begin{theorem}\label{bounds-ind-dom}
Let $G=(V,E)$ be a graph of order $n\ge 2$ and let $H$ be a graph with root $v$ and at least two vertices. Then
$$n(i(H)-1)+i(G)\le i(G\circ H)\le i(H)\alpha(G) + i(H-v)(n - \alpha(G)).$$
\end{theorem}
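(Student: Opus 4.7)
The plan is to prove the two inequalities separately. The upper bound will be obtained by an explicit construction driven by an $\alpha(G)$-set, while the lower bound will come from analyzing an arbitrary $i(G\circ H)$-set copy-by-copy and extracting from it an independent dominating set of $G$. I expect the lower bound to be the main obstacle, and in particular the step that converts the combinatorial information about the roots into the parameter $i(G)$.

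For the lower bound, let $S$ be an $i(G\circ H)$-set and set $S_i=S\cap V_i$. I would partition $V(G)$ into three classes according to how the root $v_i$ of the copy $H_i$ is handled by $S$: $A=\{v_i:v_i\in S\}$; $B$ is the set of $v_i\notin A$ that are dominated by some vertex of $S_i$ lying in $V_i\setminus\{v_i\}$; and $C$ contains the remaining roots, each of which must be dominated by a $G$-neighbor $v_j\in A$. A direct verification shows that for $v_i\in A\cup B$ the set $S_i$ is an independent dominating set of $H_i$, so $|S_i|\ge i(H)$, while for $v_i\in C$ the set $S_i$ is an independent dominating set of $H_i-v_i$, so $|S_i|\ge i(H-v)\ge i(H)-1$ by Lemma~\ref{lema-indep-1}. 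Summing over all copies gives $|S|\ge n\,i(H)-|C|$, so it suffices to prove $|C|\le n-i(G)$, equivalently $|A\cup B|\ge i(G)$.

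The key step is this lower bound on $|A\cup B|$. Since $S$ is independent in $G\circ H$ and the edges of $G$ sit inside $G\circ H$, the set $A$ is independent in $G$. Extend $A$ to a maximal independent set $D$ of $G$; being maximal, $D$ is an independent dominating set, so $|D|\ge i(G)$. On the other hand, every $w\in D\setminus A$ is non-adjacent to $A$ in $G$, so $w$ cannot belong to $C$ (whose members are dominated by $A$ through $G$-edges); since $w\notin A$, we conclude $w\in B$. Thus $D\subseteq A\cup B$, yielding $|A\cup B|\ge |D|\ge i(G)$ and completing the lower bound $|S|\ge n(i(H)-1)+i(G)$.

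For the upper bound, let $W$ be an $\alpha(G)$-set; as a maximum independent set of $G$, it is also maximal and hence dominates $G$. If the root $v$ belongs to some $i(H)$-set, choose for every $v_j\in W$ an $i(H_j)$-set $S_j$ containing $v_j$, and for every $v_i\notin W$ an $i(H_i-v_i)$-set $T_i$; then $S=\bigcup_{v_j\in W}S_j\cup\bigcup_{v_i\notin W}T_i$ is independent (the only roots appearing in $S$ lie in the independent set $W$) and dominating (each root $v_i\notin W$ has a $G$-neighbor in $W\cap S$), with size $\alpha(G)\,i(H)+(n-\alpha(G))\,i(H-v)$ as required. If instead $v$ belongs to no $i(H)$-set, then Lemma~\ref{lemma_G-v_id} gives $i(H-v)=i(H)$, the claimed bound collapses to $n\,i(H)$, and the union of arbitrary $i(H_i)$-sets (none of which can contain a root) is an independent dominating set of $G\circ H$ of precisely that size.
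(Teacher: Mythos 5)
Your proof is correct, and its overall architecture (copy-by-copy analysis of an $i(G\circ H)$-set for the lower bound, an explicit construction driven by a maximum independent set of $G$ for the upper bound) matches the paper's. The two places where you diverge are worth noting. For the lower bound, the paper reaches the same inequality $|A|+|B|\ge i(G)$ by observing that $A$ is an independent dominating set of $G-B$ and invoking Lemma \ref{lema-indep-1}; your route---extending $A$ to a maximal independent set $D$ of $G$ and checking that $D\subseteq A\cup B$ because no vertex of $D\setminus A$ can have a $G$-neighbor in $A$---is equivalent but more self-contained, since it does not need the vertex-deletion lemma for that step. For the upper bound, your treatment is actually \emph{more} careful than the paper's: the paper takes $A_i$ to be an arbitrary $i(H_i)$-set for each root $u_i$ in the maximum independent set and then adjoins the set $A$ of those roots to the union, but if some $A_i$ does not contain $v_i$ then $v_i$ is dominated by, hence adjacent to, a vertex of $A_i$, so independence can fail and the cardinality can exceed the claimed bound. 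Your case split---choosing $A_i$ to contain the root whenever $v$ lies in some $i(H)$-set, and otherwise falling back on Lemma \ref{lemma_G-v_id} to reduce the bound to $n\,i(H)$ and realize it by a union of root-free $i(H_i)$-sets---closes exactly that gap, at the modest cost of an extra case. In short: same skeleton, but your upper-bound argument is the one that actually compiles.
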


\begin{proof}
Let $S$ be an $i(G\circ H)$-set and let $S_i=S\cap V_i$, $i\in \{1,...,n\}$. If $v\in S_j$ for some $j\in \{1,...,n\}$, then $S_j$ is an independent dominating set in $H_j$. So, $|S_j|\ge i(H)$. On the contrary, if $v\notin S_k$ for some $k\in \{1,...,n\}$, then $S_k$ independently dominates all vertices of $H_k-v$.  So, $S_k$ is an independent dominating set in $H_k-v$ and by Lemma \ref{lema-indep-1} we have that $|S_k|\ge i(H_k-v)\ge i(H)-1$. If $|S_j|=i(H_j)-1$ for some $j\in \{1,...,n\}$, then $v$ is not independently dominated by $S_j$. Also, if $v$ is independently dominated by $S_l$ for some $l\in \{1,...,n\}$, then $|S_l|\ge i(H_l)$. Let $A=S\cap V$ and let $B\subset V$ be the set of vertices of $G$ such that every vertex $u_i\in B$ is independently dominated by a vertex not in $G$. Notice that $A$ is an independent dominating set in $G-B$. So, by Lemma \ref{lema-indep-1} we have that $|A|\ge i(G-B)\ge i(G)-|B|$ and so, $|B|\ge i(G)-|A|$. Also, for every vertex $u_i\in B$ we have that $|S_i|\ge i(H_i)$ and we have the following,

\begin{align*}
|S|&=\sum_{i=1}^n|S_i|\\
&=\sum_{i=1}^{|A|}|S_i| + \sum_{i=1}^{|B|}|S_i|+\sum_{i=1}^{n-|A|-|B|}|S_i|\\
&\ge \sum_{i=1}^{|A|}i(H)+\sum_{i=1}^{|B|}i(H)+\sum_{i=1}^{n-|A|-|B|}(i(H)-1)\\
&=|A|i(H)+|B|i(H)+(n-|A|-|B|)(i(H)-1)\\
&=n(i(H)-1)+|A|+|B|\\
&\ge n(i(H)-1)+i(G).
\end{align*}
Therefore, the lower bound follows.

To obtain the upper bound, let $A$ be an independent set of maximum cardinality in $G$. Now, for every vertex $u_i\in A$ let $A_i$ be an independent dominating set in $H_i$. Also, for every $u_j\notin A$ let $B_j$ be an independent dominating set in $H_j-v$. Then, it is clear that $\left(\bigcup_{i=1}^{|A|}A_i\right)\cup\left(\bigcup_{j=1}^{n-|A|}B_j\right)\cup A$ is an independent dominating set in $G\circ H$. Therefore the upper bound follows.
\end{proof}

Notice that the above bounds are tight. For instance, if $G$ is the path graph $P_n$ and $H$ is the star graph $S_{1,m}$, $m\ge 2$, with root $v$ in the central vertex, (notice that $G\circ H$ is a caterpillar), then by the above theorem,
\begin{align*}
i(G\circ H)&\le i(S_{1,m})\alpha(P_n) + i(\overline{K_m})(n - \alpha(P_n))\\&=\left\lceil\frac{n}{2}\right\rceil+m\left(n-\left\lceil\frac{n}{2}\right\rceil\right)\\&=mn-\left\lceil\frac{n}{2}\right\rceil(m-1).
\end{align*}

On the contrary, let $S$ be an independent dominating set in $G\circ H$, let $A$ be the set of vertices of $P_n$ belonging to $S$ and let $B_i$, $i\in \{1,...,n\}$, be the set of vertices of $H_i-v$ belonging to $S$. If there is a copy $H_j$ of $H$ in $G\circ H$ such that the root $v$ of $H_j$ belongs to $S$, then neither any vertex of $H_j-v$ nor any neighbor of $v$ in $G$ belongs to $S$. Moreover, if for some copy $H_l$ of $H$ in $G\circ H$ is satisfied that the root $v$ of $H_l$ does not belong to $S$, then every vertex of $H_l-v$ belongs to $S$. Thus,
\begin{align*}
|S|&=|A|+ \sum_{i=1}^{n-|A|}|B_{j_i}|\\&=|A|+m(n-|A|)\\&=mn-|A|(m-1)\\&\ge mn-\alpha(G)(m-1)\\&=mn-\left\lceil\frac{n}{2}\right\rceil(m-1).
\end{align*}

So, $i(G\circ H)=mn-\left\lceil\frac{n}{2}\right\rceil(m-1)$ and the upper bound is tight. To see the sharpness of the lower bound, consider $G$ as a path graph $P_n$ and the graph $H$ obtained from the star graph $S_{1,m}$, $m\ge 2$, by subdividing an edge. Let $v$ be the vertex of $H$ having distance two from the central vertex of the star. If $v$ is the root of $H$, then Theorem \ref{bounds-ind-dom} (ii) leads to
$$i(G\circ H)\ge n(i(H)-1)+i(G)=n(2-1)+\left\lceil\frac{n}{3}\right\rceil=\left\lceil\frac{n}{3}\right\rceil+n.$$

On the other side, let $A$ be the set of all central vertices of all copies of the star $S_{1,m}$, used to obtain $G\circ H$. Since $i(H)=2$ we have that $|A|=n(i(H)-1)$. Let $B$ be an independent dominating set in the path $P_n$. It is clear that $A\cup B$ is an independent dominating set in $G\circ H$. So, $i(G\circ H)\le n(i(H)-1)+i(G)=n+\left\lceil\frac{n}{3}\right\rceil$. As a consequence $i(G\circ H)= n+\left\lceil\frac{n}{3}\right\rceil$ and the lower bound of Theorem \ref{bounds-ind-dom} is achieved.

Moreover, notice that there are graphs in which are not attained any one of the above bounds. The next theorem is an example of that. In order to present such a result we need to introduce some notation. Let $D$ be a set of vertices of a graph $G$, and let $v\in D$. We say that a vertex $x$ is a {\em private neighbor} of $v$ with respect to $D$ if $N[x]\cap D = \{v\}$. The {\em private neighbor set of $v$} with respect to $D$ is $pn[v,D] = N[v] - N[D - \{v\}]$.

\begin{theorem}
Let $G=(V,E)$ be a graph of order $n\ge 2$ and let $H$ be a graph with root $v$ and at least two vertices. Then,
\begin{itemize}
\item[\rm{(i)}] If $v$ does not belong to any $i(H)$-set, then $i(G\circ H)=ni(H)$.
\item[\rm{(ii)}] If $v$ belongs to every $i(H)$-set $S$, then
$$i(G\circ H)\le \alpha(G)i(H) + (n - \alpha(G))(|pn[v,S]| + i(H) - 1).$$
\end{itemize}
\end{theorem}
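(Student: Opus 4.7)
The plan is to treat (i) via two opposite inequalities and (ii) by an explicit construction of an independent dominating set of $G\circ H$ whose size is at most the right-hand side. Throughout, I would exploit the fundamental observation that a non-root vertex of a copy $H_i$ has no neighbor outside $H_i$ in $G\circ H$, so its domination must come from inside its own copy.

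For (i), the upper bound $i(G\circ H)\le n\,i(H)$ is obtained by choosing for each $i$ an $i(H_i)$-set $S_i$ avoiding $v_i$ (available by hypothesis) and taking $\bigcup_i S_i$; this is independent because no root is selected, so no inter-copy edge is activated, and it is dominating because each $S_i$ already dominates all of $H_i$. For the lower bound, let $S$ be an $i(G\circ H)$-set and $S_i=S\cap V_i$. If $v_i\in S_i$, then $S_i$ is an independent dominating set of $H_i$, so $|S_i|\ge i(H)$; if $v_i\notin S_i$, then $S_i$ is an independent dominating set of $H_i-v_i$, so by Lemma~\ref{lemma_G-v_id} we get $|S_i|\ge i(H-v)=i(H)$. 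Summing over $i$ yields $i(G\circ H)\ge n\,i(H)$.

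For (ii), fix an $\alpha(G)$-set $A$ (which, being maximum, is maximal, so every $u_j\notin A$ has a neighbor in $A$) and an $i(H)$-set $S$ (containing $v$ by hypothesis). In each copy $H_i$ with $u_i\in A$ insert the copy $S_i$ of $S$; in each copy $H_j$ with $u_j\notin A$ insert $T_j=(S_j-\{v_j\})\cup D'_j$, where $D'_j$ is a maximal independent set of the subgraph of $H_j$ induced on $pn[v_j,S_j]-\{v_j\}$. The resulting union is independent: within a copy, the very definition of private neighbors makes $pn[v_j,S_j]$ edge-disconnected from $S_j-\{v_j\}$; across copies, no $v_j$ with $u_j\notin A$ is present while the roots $v_i$ for $u_i\in A$ are pairwise non-adjacent in $G$. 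It is also dominating: copies indexed by $A$ are dominated internally by $S_i$; for a copy indexed outside $A$, the set $S_j-\{v_j\}$ dominates $V_j-pn[v_j,S_j]$ and, by maximality, $D'_j$ dominates $pn[v_j,S_j]-\{v_j\}$, while $v_j$ itself is dominated across copies by the neighboring $v_i\in S_i$. Since $|T_j|\le (i(H)-1)+(|pn[v,S]|-1)$, the total count is at most $\alpha(G)\,i(H)+(n-\alpha(G))(i(H)+|pn[v,S]|-2)$, well within the claimed bound.

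The main obstacle is the independence/domination trade-off in the ``non-$A$'' copies of (ii): dropping the root $v_j$ leaves the vertices of $pn[v_j,S_j]-\{v_j\}$ undominated, yet inserting all of them back may destroy independence because they can induce edges in $H_j$. The remedy is to insert only a maximal independent subset $D'_j$, which still dominates $pn[v_j,S_j]-\{v_j\}$ thanks to maximality, and whose edgelessness with $S_j-\{v_j\}$ is guaranteed by the private-neighbor definition. Everything else is careful bookkeeping of the three sources of potential edges (inside a copy, between two $A$-roots, between an $A$-root and a non-$A$-root) and of the three regions needing domination (internal non-root vertices, roots of $A$, roots outside $A$).
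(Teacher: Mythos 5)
Your proof is correct, and part (i) follows the paper's own route: both arguments reduce each copy to Lemma~\ref{lemma_G-v_id} according to whether the root is selected, so $|S_i|\ge i(H)$ either way, and both build the matching upper bound from root-avoiding $i(H)$-sets. Part (ii), however, is executed genuinely differently, and in a way that matters. The paper's construction places a full $i(H)$-set $B_i$ on the copies indexed by an $\alpha(G)$-set and, on the remaining copies, inserts $pn[v,B_j]\cup(B_j-\{v\})$ wholesale; it then asserts independence and domination without checking that the vertices of $pn[v,B_j]$ are pairwise non-adjacent (they need not be) or that the roots retained via $v_j\in pn[v_j,B_j]$ do not create edges between two copies whose indices are both outside the independent set. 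Your device of replacing $pn[v_j,S_j]-\{v_j\}$ by a \emph{maximal independent subset} $D'_j$ of the subgraph it induces repairs both defects at once: maximality restores domination of the private neighborhood, the private-neighbor condition guarantees no edges back to $S_j-\{v_j\}$, and discarding $v_j$ entirely (it is dominated across copies because a maximum independent set of $G$ is maximal) eliminates the only possible inter-copy edges outside $A$. The price is a per-copy count of $i(H)+|pn[v,S]|-2$ rather than $i(H)+|pn[v,S]|-1$, which still sits under the stated bound. In short, you prove the same inequality by a construction that is more careful than the one the paper actually writes down, and your version would survive scrutiny that the paper's sketch would not.
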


\begin{proof}
(i) Let us suppose $v$ does not belong to any $i(H)$-set. From Lemma \ref{lemma_G-v_id} we have that $i(H-v)=i(H)$. So, Theorem \ref{bounds-ind-dom} leads to $i(G\circ H)\le ni(H)$. Let $S'$ be a $i(G\circ H)$-set such that $|S'| < ni(H)$. So, there exists at least one copy $H_j$ of $H$ such that $S_j = V_j\cap S'$ and $|S_j| < i(H)$. Since $S_i$ independently dominates $V_i-v$ for every $i\in \{1,...,n\}$, we have that $v$ is not dominated by $S_j$ in $H_j$. Thus, $S_j$ is an independent dominating set in $H_j-v$ and $i(H_j-v)\le |S_j|<i(H)$, which is a contradiction, since $i(H-v) = i(H)$. Therefore, $i(G\circ H)\ge ni(H)$ and the result follows.

(ii) Let $B_i$ be an $i(H_i)$-set, $i\in \{1,...,n\}$ and let $C$ be an independent set of maximum cardinality in $G$. Let $S = \bigcup_{i = 1}^{\alpha (G)} B_i\cup \bigcup_{j=1}^{n - \alpha(G)} (pn[v,B_j] + B_j -\{v\})$. We will show that $S$ is an independent dominating set of $G\circ H$.

Let $B = \bigcup_{i=1}^{n}B_i$. Notice that $B$ is a dominating set in $G\circ H$. If $G = \overline{K_n}$, then $B$ is also independent set in $G\circ H$. In this case $\alpha(G) = n$ and the upper bound follows. Now let us suppose that $G\ncong \overline{K_n}$. Since the root of every copy of $H$ belongs to $B$, there exists at least two roots $v_i$ and $v_j$, $i\ne j$, which are adjacent in $G\circ H$. Thus $B$ is not independent in $G\circ H$.

So, $B' = \bigcup_{i=1}^{\alpha(G)}B_i \cup \bigcup_{\alpha(G) + 1}^{n}(B_i - \{v\})$ is independent set in $H$ and dominates every vertex in $H_i$, except $pn[v, B_i]$. Notice that $B_i-\{v\}$ is still independent in $H_i$, and also, it dominates every vertex in $H_i$, except $pn[v, B_i]$.

Therefore, we have that $i(G\circ H)\le \alpha(G)i(H) + (n - \alpha(G))(|pn[v,S]| + i(H) - 1)$ and the upper bound follows.
\end{proof}

\section{Connected domination number and convex domination number}

A set of vertices $D$ of a graph $G$ is a {\em
connected} \cite{sampathkumar} (or {\em convex} \cite{magda1}) {\em
dominating set} in $G$ if $D$ is a dominating set and the subgraph induced by $D$, (or the set $D$) is connected (or
convex) in $G$. The minimum cardinality of any
connected (or convex) dominating set in $G$ is called
the {\em connected} (or {\em convex}) {\em domination
number} of $G$ and it is denoted by $\gamma_c(G)$ (or $\gamma_{con}(G)$). A set $D$ is a $\gamma_c(G)$-set (or a $\gamma_{con}(G)$-set) if it is a connected (or a convex) dominating set and $|D|=\gamma_c(G)$ (or $|D|=\gamma_{con}(G)$). At next we study the connected (or convex) domination number  of rooted product graphs. We begin with connected domination. This parameter was defined by Sampathkumar and Wallikar in \cite{sampathkumar}.

\begin{theorem}\label{prop1}
Let $G$ be a graph of order $n\ge 2$. Then for any graph $H$ with root $v$ and at least two vertices,
$$\gamma_c(G\circ H)\in \{n\gamma_c(H),n(\gamma_c(H)+1)\}.$$
\end{theorem}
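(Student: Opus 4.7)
The plan is to prove the theorem via a structural characterization of connected dominating sets of $G\circ H$, together with a dichotomy according to whether some $\gamma_c(H)$-set contains the root $v$. Throughout I may assume both $G$ and $H$ are connected, since otherwise $G\circ H$ is disconnected and $\gamma_c(G\circ H)$ is undefined.

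First I would establish the following structural fact: for every connected dominating set $S$ of $G\circ H$ and every $i\in\{1,\ldots,n\}$, the set $S_i=S\cap V_i$ is a connected dominating set of $H_i$ with $v_i\in S_i$. Since every vertex of $V_i\setminus\{v_i\}$ has all its neighbors in $V_i$, $S_i$ is nonempty and dominates $V_i\setminus\{v_i\}$ in $H_i$. Because $v_i$ is a cut vertex of $G\circ H$ separating $V_i\setminus\{v_i\}$ from the rest, and $n\ge 2$ guarantees a vertex of $S$ in some $V_j$ with $j\ne i$, any simple path in $\langle S\rangle$ between $S_i$ and $S_j$ must pass through $v_i$, forcing $v_i\in S$. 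The same cut-vertex property shows that no simple path in $\langle S\rangle$ joining two vertices of $S_i$ can leave and reenter $V_i$, since this would visit $v_i$ twice; hence $\langle S_i\rangle$ is connected in $H_i$. Together with $v_i\in S_i$, this makes $S_i$ a connected dominating set of $H_i$, so $|S_i|\ge \gamma_c(H)$ and therefore $\gamma_c(G\circ H)\ge n\gamma_c(H)$.

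For the upper bound, let $D$ be any $\gamma_c(H)$-set; since $D$ dominates $v$, the set $D\cup\{v\}$ is connected and dominating in $H$, with $|D\cup\{v\}|\le \gamma_c(H)+1$. Placing a copy of $D\cup\{v\}$ in each $H_i$ yields a set that dominates $V(G\circ H)$ and is connected (each copy is connected, and all roots are included, so global connectivity follows from that of $G$). This gives $\gamma_c(G\circ H)\le n(\gamma_c(H)+1)$.

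The theorem now follows by splitting into two cases. If some $\gamma_c(H)$-set contains $v$, the above construction produces a connected dominating set of size exactly $n\gamma_c(H)$, matching the lower bound; hence $\gamma_c(G\circ H)=n\gamma_c(H)$. Otherwise every connected dominating set of $H$ containing $v$ has size at least $\gamma_c(H)+1$, so the structural fact gives $|S_i|\ge \gamma_c(H)+1$ for every $i$, whence $\gamma_c(G\circ H)\ge n(\gamma_c(H)+1)$ and the upper bound is met with equality. The main obstacle is the structural fact itself, and specifically the careful use of the cut-vertex property of each root $v_i$ both to force $v_i\in S$ and to guarantee that $\langle S_i\rangle$ is connected inside $H_i$.
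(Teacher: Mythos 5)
Your proof is correct and follows essentially the same route as the paper: both rest on the facts that each root $v_i$ is a cut vertex forcing $v_i\in S$ and $S\cap V_i$ to be a connected dominating set of $H_i$, and both then split on whether some $\gamma_c(H)$-set contains the root. Your write-up is somewhat more careful, since the paper merely asserts the structural fact that you justify with the explicit simple-path/cut-vertex argument.
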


\begin{proof}
Since the vertex $v$ of $H$ is a cut vertex of $G\circ
H$, the vertex $v$ of each copy $H_i$ of $H$ belongs to every
connected dominating set of $G\circ H$. Also, the intersection of
every connected dominating set of $G\circ H$ and the set of vertices of every copy
of $H$ contains a connected dominating set of $H$. So,
$\gamma(G\circ H)\ge \sum_{i=1}^n\gamma_c(H)=n\gamma_c(H)$.

Hence, if $v$ belongs to a $\gamma_c(H_i)$-set $S_i$, then by taking $S=\cup_{i=1}^n S_i$ we have that $S$ is a connected dominating set. So, $\gamma_c(G\circ H)\le \sum_{i=1}^n|S_i|=n\gamma_c(H)$. Therefore, $\gamma_c(G\circ H)=n\gamma_c(H)$.

Now, let us suppose that $\gamma_c(G\circ H)\ne n\gamma_c(H)$. So, $v$ does not belong to any $\gamma_c(H_i)$-set $S_i$. Let $S$ be a $\gamma_c(G\circ H)$-set. If $|S|< n\gamma_c(H)$, then there exists a copy $H_l$ of $H$ in $G\circ H$ in which $|S\cap V_l|<\gamma_c(H)$ and $S\cap V_l$ is a connected dominating set in $H$, which is a contradiction. So, $|S|>n\gamma_c(H)$ and there exists a copy $H_j$ of $H$ such that $|S\cap V_j|>\gamma_c(H)$. Since the root $v$ of $H$ does not belong to any
$\gamma_c(H)$-set, and also $v$ belongs to every $\gamma_c(G\circ H)$-set, we obtain that
$$|S|=\sum_{i=1}^n|S\cap V_i|+|V|\ge n\gamma_c(H)+n=n(\gamma_c(H)+1).$$

On the other hand, let $S_i$ be a $\gamma_c(H_i)$-set, $i\in \{1,...,n\}$. Since $v$ does not belong to any $\gamma_c(G\circ H)$-set, it is satisfied that $v\notin S_i$ for every $i\in \{1,...,n\}$. Thus,  by taking the set $S=V\cup(\bigcup_{i=1}^{n}S_i)$ we have that $S$ is a connected dominating set and, as a consequence,
$$\gamma_c(G\circ H)\le |S|=\sum_{i=1}^n|S_i|+|V|=n\gamma_c(H)+n=n(\gamma_c(H)+1).$$
Therefore, the result follows.
\end{proof}

Next we study the connected domination number of some particular cases of rooted product graphs. We denote by $n_1(G)$ the number of end vertices (vertices of degree one) in $G$ and by $\Omega(G)$ the set of end vertices in $G;$ $|\Omega(G)|=n_1(G).$

\begin{lemma}{\em \cite{sampathkumar}}\label{obs2}
If $T$ is a tree of order at least three, then $\gamma_c(T)=n(T)-n_1(T).$
\end{lemma}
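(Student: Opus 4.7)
The plan is to prove both inequalities $\gamma_c(T)\le n(T)-n_1(T)$ and $\gamma_c(T)\ge n(T)-n_1(T)$ by exhibiting the set $I:=V(T)\setminus \Omega(T)$ of internal (non-leaf) vertices as the canonical minimum connected dominating set.

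For the upper bound I would show that $I$ itself is a connected dominating set, so that $\gamma_c(T)\le |I|=n(T)-n_1(T)$. Domination is immediate: because $T$ has order at least three, every leaf $\ell\in\Omega(T)$ has its unique neighbor of degree at least two, hence in $I$. For connectedness of $\langle I\rangle$, I would use the tree structure: given any two vertices $u,v\in I$, the unique $u$--$v$ path in $T$ has only internal intermediate vertices (any intermediate vertex of such a path has degree at least two in $T$), so the whole path lies in $I$ and therefore in $\langle I\rangle$.

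For the lower bound I would argue that every connected dominating set $D$ of $T$ must contain $I$, so that $\gamma_c(T)\ge |I|$. Suppose, for a contradiction, that some internal vertex $v\in I$ is not in $D$. Since $v$ is internal in a tree it is a cut vertex, so $T-v$ splits into components $C_1,\dots,C_k$ with $k\ge 2$. Because $D\subseteq V(T)\setminus\{v\}$ and all edges of $T$ between distinct $C_i$ pass through $v$, the induced subgraph $\langle D\rangle$ is contained in $T-v$, which is disconnected; since $\langle D\rangle$ is assumed connected, $D$ must lie entirely inside one component, say $C_1$. But then no vertex of $C_2$ can be dominated by $D$ (its neighbors in $T$ are either in $C_2$ or equal to $v$, and neither lies in $D$), contradicting that $D$ dominates $T$.

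I do not anticipate a serious obstacle: the two steps are essentially (a) a structural observation about subtrees induced by internal vertices, and (b) the cut-vertex argument. The only subtlety is the hypothesis $n(T)\ge 3$, which is used in the upper bound to guarantee that each leaf's neighbor is genuinely internal (for $n=2$ the formula $n-n_1=0$ would be wrong), and also to ensure $I$ is nonempty so that $\langle I\rangle$ being connected makes sense.
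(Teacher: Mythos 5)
Your proof is correct. Note that the paper itself gives no proof of this lemma --- it is quoted as a known result from Sampathkumar and Walikar \cite{sampathkumar} --- so there is nothing to compare against; your argument (the internal vertices form a connected dominating set, and every connected dominating set must contain every cut vertex, hence every internal vertex of the tree) is the standard one and is complete, including the correct use of the hypothesis $n(T)\ge 3$.
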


\begin{lemma}\label{rem2}
If $G$ is a connected graph, $H$ is a tree  of order at least three and the root $v$ is non-end vertex of $H,$ then $\gamma_c(G\circ H)=n\gamma_c(H).$
\end{lemma}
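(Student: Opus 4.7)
The plan is to reduce the lemma to the dichotomy already established in Theorem \ref{prop1}. That theorem shows $\gamma_c(G\circ H)\in\{n\gamma_c(H),n(\gamma_c(H)+1)\}$, and its proof reveals that the lower value $n\gamma_c(H)$ is attained precisely when the root $v$ lies in \emph{some} $\gamma_c(H)$-set (one then takes the union of such sets over all copies). So the entire task is to exhibit a $\gamma_c(H)$-set that contains the non-end root $v$.

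First, I would invoke Lemma \ref{obs2}: since $H$ is a tree of order at least three, $\gamma_c(H)=n(H)-n_1(H)$. This suggests looking at the set $W=V(H)\setminus\Omega(H)$ of non-end vertices of $H$. I would then verify directly that $W$ is itself a connected dominating set of $H$: every end vertex of $H$ has a unique neighbor, and that neighbor must be non-end (otherwise $H\cong K_2$, contradicting $n(H)\ge 3$), so $W$ dominates $H$; moreover, the subgraph induced by $W$ is the tree obtained from $H$ by pruning all leaves simultaneously, which is connected. Hence $|W|\ge\gamma_c(H)=n(H)-n_1(H)=|W|$, giving equality, so $W$ is a $\gamma_c(H)$-set.

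Since by hypothesis $v$ is a non-end vertex of $H$, we have $v\in W$, so $v$ belongs to the $\gamma_c(H)$-set $W$. Now I would apply the first half of the proof of Theorem \ref{prop1}: for each copy $H_i$ take the corresponding copy $W_i$ of $W$, so that $v_i\in W_i$; the union $S=\bigcup_{i=1}^n W_i$ is dominating in $G\circ H$, and it is connected because each $W_i$ is connected in $H_i$, each $W_i$ contains the identification vertex $v_i$, and the $v_i$'s form a connected spanning subgraph of $G$ (using connectedness of $G$) — so the $W_i$'s are glued together through the $G$-backbone into a single connected subgraph. This yields $\gamma_c(G\circ H)\le n|W|=n\gamma_c(H)$, and combined with the matching lower bound from Theorem \ref{prop1}, the equality follows.

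The only mildly nontrivial step is the verification that the non-end vertices of a tree induce a connected subgraph, but this is a standard fact about trees (removing all leaves of a tree of order at least three leaves a subtree); everything else is a direct bookkeeping application of Theorem \ref{prop1} and Lemma \ref{obs2}.
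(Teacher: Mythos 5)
Your proof is correct and follows essentially the same route as the paper: both take the set of non-end vertices in each copy of $H$ (which contains the root by hypothesis) as a connected dominating set of cardinality $n\gamma_c(H)$ via Lemma \ref{obs2}, and pair it with the lower bound $\gamma_c(G\circ H)\ge n\gamma_c(H)$ obtained from the cut-vertex argument. Your explicit verification that the non-end vertices of a tree of order at least three form a minimum connected dominating set is, if anything, more careful than the paper's own writeup of the same step.
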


\begin{proof} Since the root of the graph $H$ is a cut vertex in the graph $G\circ
H,$ we have that root of each copy $H_i$ of $H$ belongs to every
connected dominating set of $G\circ H$. Also, the intersection of
every connected dominating set of $G\circ H$ and the set of vertices of every copy
of $H$ contains a connected dominating set of $H$. So,
$\gamma_c(G\circ H)\ge \sum_{i=1}^n\gamma_c(H)=n\gamma_c(H)$. Let $D$ be a connected dominating set of $G\circ H.$ Since $H$ is a tree, from Lemma \ref{obs2}, no end vertex belongs to any minimum connected dominating set of $H$ and $\gamma_c(H)=n(H)-n_1(H).$ Also, for every $H_i,$ $\gamma_c(H_i)=n(H_i)-n_1(H_i).$ Since $v$ is non-end vertex of $H,$ we have that $|D|=|V(G)\cup \sum_{i=1}^n(V_i-\Omega_i)|.$ Thus $\gamma_c(G\circ H)\leq |D|=n \gamma(H)$ and we are done.
\end{proof}

\begin{lemma}\label{obs1}
If $T_1, T_2$ are trees  of order at least three, then $T_1 \circ T_2$ is also a tree of order $n(T_1\circ T_2)=n(T_1)n(T_2)$. Moreover, $n_1(T_1\circ T_2)\in \{n(T_1)n_1(T_2), n(T_1)(n_1(T_2)-1)\}.$
\end{lemma}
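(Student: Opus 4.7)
The plan is to dispatch the three assertions—treeness, order, and the leaf count—by direct counting arguments.

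First, I would handle the order and structure together. Since $T_1\circ T_2$ is obtained from $T_1$ by attaching a fresh copy $H_i$ of $T_2$ at each $v_i\in V(T_1)$, identified at the root $v$, each of the $n(T_1)$ copies contributes $n(T_2)-1$ new vertices, for a total of $n(T_1)+n(T_1)(n(T_2)-1)=n(T_1)n(T_2)$ vertices. For the edges, the construction preserves the edge sets of $T_1$ and of each $H_i$ separately (identification happens only at a vertex), so $|E(T_1\circ T_2)|=(n(T_1)-1)+n(T_1)(n(T_2)-1)=n(T_1)n(T_2)-1$. Connectivity is immediate: inside any copy $H_i$ one can reach $v_i$, and any two $v_i,v_j$ are connected through $T_1$. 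A connected graph on $N$ vertices with exactly $N-1$ edges is a tree, so $T_1\circ T_2$ is a tree of the claimed order.

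For the leaf count, the key observation is that each identified vertex $v_i$ in $T_1\circ T_2$ satisfies $\delta_{T_1\circ T_2}(v_i)=\delta_{T_1}(v_i)+\delta_{T_2}(v)$. Because $T_1$ and $T_2$ are trees of order at least three, neither $v_i$ in $T_1$ nor $v$ in $T_2$ is isolated, so both summands are at least $1$ and thus $\delta_{T_1\circ T_2}(v_i)\ge 2$. Consequently no root vertex $v_i$ is an end vertex of $T_1\circ T_2$, and the set $\Omega(T_1\circ T_2)$ is exactly the union, over $i\in\{1,\dots,n(T_1)\}$, of the end vertices of $H_i$ other than $v_i$.

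The stated dichotomy then follows by splitting on whether $v$ is an end vertex of $T_2$. If $v\notin\Omega(T_2)$, every leaf of each $H_i$ remains a leaf of $T_1\circ T_2$, giving $n_1(T_1\circ T_2)=n(T_1)n_1(T_2)$; if $v\in\Omega(T_2)$, then in every copy one leaf ($v_i$) is promoted to degree at least $2$, so each copy contributes only $n_1(T_2)-1$ leaves, yielding $n_1(T_1\circ T_2)=n(T_1)(n_1(T_2)-1)$. The entire argument is a bookkeeping exercise; the only delicate point—and the one I would highlight—is the use of the hypothesis $n(T_1),n(T_2)\ge 3$ to guarantee $\delta_{T_1}(v_i)\ge 1$ and $\delta_{T_2}(v)\ge 1$, without which a root $v_i$ could become an end vertex and the dichotomy would fail.
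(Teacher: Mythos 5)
Your argument is correct and takes essentially the same route as the paper's proof, which simply counts $n(T_1)+n(T_1)(n(T_2)-1)$ vertices and splits on whether the root $v$ is an end vertex of $T_2$. Your write-up is in fact more complete: the paper never verifies that $T_1\circ T_2$ is a tree, nor that the identified vertices cannot be end vertices (your computation $\delta_{T_1\circ T_2}(v_i)=\delta_{T_1}(v_i)+\delta_{T_2}(v)\ge 2$ supplies this), though for that last step connectivity of a tree of order at least two already gives both summands positive, so the hypothesis of order at least three is not really what is being used there.
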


\begin{proof} For a graph $T_1\circ T_2$ is $n(T_1\circ T_2)=n(T_1)+ n(T_1)(n(T_2)-1)=n(T_1)n(T_2).$ If a root vertex $v$ is an end vertex of $T_2,$ then  $n_1(T_1\circ T_2)= n(T_1)(n_1(T_2)-1).$ On the contrary, if $v$ is not an end vertex of $T_2,$ then $n_1(T_1\circ T_2)= n(T_1)n_1(T_2).$
\end{proof}

\begin{theorem}\label{propt1}
Let $T_1, T_2$ be trees  of order at least three. Then $\gamma_c(T_1\circ T_2)=n(T_1) \gamma_{c}(T_2)$ if and only if the rooted vertex $v$ of $T_2$ is not an end vertex of $T_2$.
\end{theorem}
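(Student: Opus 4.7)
The plan is to prove the biconditional by direct computation, using Lemmas \ref{rem2}, \ref{obs2}, and \ref{obs1} as the main tools. Since both implications reduce to short calculations, no new technical machinery should be needed.

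For the ``if'' direction, I would simply invoke Lemma \ref{rem2}: $T_1$ is connected (being a tree), $T_2$ is a tree of order at least three, and the assumption is exactly that the root $v$ is a non-end vertex of $T_2$. This directly yields $\gamma_c(T_1\circ T_2)=n(T_1)\gamma_c(T_2)$.

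For the ``only if'' direction, I would argue by contrapositive: assume $v$ is an end vertex of $T_2$ and show the equality fails. By Lemma \ref{obs1}, $T_1\circ T_2$ is a tree of order $n(T_1)n(T_2)$, and since $v$ is an end vertex of $T_2$, the same lemma gives $n_1(T_1\circ T_2)=n(T_1)(n_1(T_2)-1)$. Applying Lemma \ref{obs2} to the tree $T_1\circ T_2$ (which has order at least $9\ge 3$) and then to $T_2$, I compute
\begin{align*}
\gamma_c(T_1\circ T_2) &= n(T_1)n(T_2)-n(T_1)(n_1(T_2)-1) \\
&= n(T_1)\bigl(n(T_2)-n_1(T_2)\bigr)+n(T_1) \\
&= n(T_1)\gamma_c(T_2)+n(T_1).
\end{align*}
Since $n(T_1)\ge 3>0$, this is strictly greater than $n(T_1)\gamma_c(T_2)$, so equality fails, completing the contrapositive.

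I do not anticipate any real obstacle here: the heavy lifting has already been done in Lemmas \ref{obs1}, \ref{obs2}, and \ref{rem2}. The only mild care-point is verifying the hypotheses of Lemma \ref{obs2} each time it is applied (namely that the relevant tree has order at least three), which is immediate from $n(T_1),n(T_2)\ge 3$. Everything else is arithmetic.
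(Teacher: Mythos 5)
Your proof is correct and follows essentially the same route as the paper: the ``only if'' direction is the identical computation via Lemmas \ref{obs1} and \ref{obs2} (phrased as a contrapositive rather than a contradiction), and your ``if'' direction simply cites Lemma \ref{rem2}, which the paper instead re-derives in place from the same two lemmas. No gaps.
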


\begin{proof}
From Remarks \ref{obs2} and \ref{obs1}, we have $\gamma_c(T_1 \circ T_2)=n(T_1\circ T_2)-n_1(T_1\circ T_2).$  Also from Lemma \ref{obs1} we have $n(T_1\circ T_2)=n(T_1)n(T_2).$ Let $v$ be a non end vertex of $T_2$. Hence, $n_1(T_1\circ T_2)=n(T_1)n_1(T_2).$ Thus $\gamma_c(T_1 \circ T_2)=n(T_1)n(T_2)-n_1(T_2)n(T_1)$ $=n(T_1)(n(T_2)-n_1(T_2))=n(T_1)\gamma_c(T_2).$

Assume now  $\gamma_c(T_1\circ T_2)=n(T_1) \gamma_{c}(T_2)$ and suppose $v$ is an end vertex of $T_2.$ Hence we have $n_1(T_1 \circ T_2)=(n_1(T_2)-1)n(T_1)=n_1(T_2)n(T_1)-n(T_1).$ Since $n(T_1\circ T_2)=n(T_1)n(T_2),$  we have $\gamma_c(T_1\circ T_2)=n(T_1)n(T_2)-(n_1(T_2)n(T_1)-n(T_1))$ $=n(T_1)(n(T_2)-n_1(T_2)+1)=n(T_1)(\gamma_c(T_2)+1),$ what gives a contradiction.
\end{proof}

From Theorems \ref{prop1} and \ref{propt1} we can conclude the following.

\begin{corollary}
$\gamma_c(T_1\circ T_2)=n(T_1)(\gamma_c(T_2)+1)$ if and only if the rooted vertex $v$ of $T_2$ is an end vertex of $T_2$.
\end{corollary}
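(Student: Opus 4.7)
The plan is to derive this corollary directly from Theorems \ref{prop1} and \ref{propt1}, which together essentially contain the statement. Theorem \ref{prop1}, applied to $G=T_1$ and $H=T_2$ (both of order at least three, hence certainly at least two), yields the dichotomy
\[
\gamma_c(T_1\circ T_2)\in\{\,n(T_1)\gamma_c(T_2),\; n(T_1)(\gamma_c(T_2)+1)\,\}.
\]
Since $n(T_1)\ge 3>0$, the two candidate values are distinct, so exactly one of them holds.

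Next, I would invoke Theorem \ref{propt1}, which is a biconditional characterizing when the first candidate is attained: $\gamma_c(T_1\circ T_2)=n(T_1)\gamma_c(T_2)$ if and only if the root $v$ of $T_2$ is not an end vertex of $T_2$. Combining this with the dichotomy above, the second candidate $n(T_1)(\gamma_c(T_2)+1)$ is attained if and only if the first is not, and the first fails precisely when $v$ \emph{is} an end vertex of $T_2$. This gives both directions of the corollary simultaneously.

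Concretely, for the forward direction I would argue by contraposition: assume $v$ is not an end vertex of $T_2$; then Theorem \ref{propt1} gives $\gamma_c(T_1\circ T_2)=n(T_1)\gamma_c(T_2)\ne n(T_1)(\gamma_c(T_2)+1)$, contradicting the hypothesis. For the converse, assume $v$ is an end vertex of $T_2$; then by Theorem \ref{propt1} again, $\gamma_c(T_1\circ T_2)\ne n(T_1)\gamma_c(T_2)$, so the dichotomy from Theorem \ref{prop1} forces $\gamma_c(T_1\circ T_2)=n(T_1)(\gamma_c(T_2)+1)$.

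There is essentially no main obstacle here: the only subtlety is to make sure that Theorem \ref{prop1} applies under the hypotheses of the corollary (trees of order at least three, hence at least two vertices, and connected), and that the two possible values in the dichotomy are indeed different so that the biconditional in Theorem \ref{propt1} can be flipped. Both are immediate from $n(T_1)\ge 3$.
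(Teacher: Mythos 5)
Your proposal is correct and matches the paper's own justification: the paper derives this corollary precisely by combining the dichotomy of Theorem \ref{prop1} with the biconditional of Theorem \ref{propt1}, exactly as you do. Your additional remarks (checking the hypotheses of Theorem \ref{prop1} and that the two candidate values are distinct) are sound and merely make explicit what the paper leaves implicit.
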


Convex domination was defined by Topp in \cite{topp} and it was first characterized in \cite{magda1}.
Notice that for the case of the convex domination number of $G\circ
H$ the result is similar to Theorem \ref{prop1} about connected
domination. The proofs of the following results are omitted due to the
analogy with the above ones.

\begin{theorem}
Let $G$ be a graph of order $n\ge 2$. Then for any graph $H$ with root $v$ and at least two vertices,
$$\gamma_{con}(G\circ H)\in \{n\gamma_{con}(H),n(\gamma_{con}(H)+1)\}.$$
\end{theorem}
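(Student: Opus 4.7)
The plan is to mirror the proof of Theorem~\ref{prop1}, since only two structural properties of $G\circ H$ are needed, and both survive the transition from ``connected'' to ``convex''. First, each root $v_i$ is a cut vertex of $G\circ H$, so every $G\circ H$-geodesic between vertices of distinct copies $H_i$ and $H_j$ factors as an $H_i$-geodesic into $v_i$, a shortest $v_i$--$v_j$ path of $G$, and an $H_j$-geodesic out of $v_j$. Second, for any two vertices of one copy, the geodesics inside $H_i$ coincide with the corresponding $G\circ H$-geodesics.

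Given a $\gamma_{con}(G\circ H)$-set $S$, I would first observe that $S\cap V_i$ must dominate $V_i\setminus\{v_i\}$, so $S$ meets every copy; by the cut-vertex fact, picking a vertex of $S$ in a second copy then forces each $v_i$ into $S$. By the second fact, $S\cap V_i$ is convex in $H_i$, and since it contains $v_i$ it is a convex dominating set of $H_i$, giving $|S\cap V_i|\ge\gamma_{con}(H)$ and hence $\gamma_{con}(G\circ H)\ge n\gamma_{con}(H)$. I would then split into two cases. If some $\gamma_{con}(H)$-set contains $v$, replicating it in each copy yields $S=\bigcup_{i=1}^n S_i$ with $|S|=n\gamma_{con}(H)$; since $V(G)\subseteq S$, convexity of $S$ in $G\circ H$ reduces copy-by-copy to convexity of $S_i$ in $H_i$ together with the three-piece decomposition of cross-copy geodesics described above. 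If instead $v$ lies in no $\gamma_{con}(H)$-set, the lower bound strengthens to $|S\cap V_i|\ge\gamma_{con}(H)+1$ for each $i$, giving $\gamma_{con}(G\circ H)\ge n(\gamma_{con}(H)+1)$, with the matching upper bound obtained by the same union construction using minimum convex dominating sets of $H_i$ of size $\gamma_{con}(H)+1$ that contain $v_i$.

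The main obstacle is the availability, in the second case, of a convex dominating set of $H$ of size exactly $\gamma_{con}(H)+1$ containing $v$. To secure this I would take any $\gamma_{con}(H)$-set $S_0$, use the fact that $S_0$ dominates $v$ to pick a neighbor $w\in S_0$ of $v$, and argue that $S_0\cup\{v\}$ remains convex in $H$: any geodesic from $v$ to a vertex $u\in S_0$ can be routed through $w$ and stays in $S_0\cup\{v\}$ by convexity of $S_0$, while an alternative geodesic bypassing $w$ would place $v$ on a geodesic between two vertices of $S_0$, contradicting $v\notin S_0$ together with the convexity of $S_0$. With this in hand the two displayed values exhaust all possibilities, and the theorem follows.
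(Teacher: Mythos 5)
Your overall architecture is sound, and the paper itself offers no proof to compare against: it merely asserts that the argument for Theorem~\ref{prop1} carries over. Your reduction steps are all correct. Each copy $H_i$ is isometric in $G\circ H$ and closed under geodesics between its own vertices; $S$ must meet each $V_i$ in order to dominate $V_i\setminus\{v_i\}$, and the cut-vertex property then forces every root into $S$, so each $S\cap V_i$ is a convex dominating set of $H_i$ containing $v_i$; and when some $\gamma_{con}(H)$-set contains $v$, the replicated union is convex because $V(G)\subseteq S$ and cross-copy geodesics decompose through the roots. You have also correctly isolated the one point where the analogy with connected domination is not automatic.

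But your resolution of that point is wrong. To show that $S_0\cup\{v\}$ is convex you must show that \emph{every} geodesic from $v$ to each $u\in S_0$ lies in $S_0\cup\{v\}$; exhibiting one geodesic routed through the neighbour $w$ proves nothing, and the claim that a geodesic avoiding $w$ would ``place $v$ on a geodesic between two vertices of $S_0$'' is a non sequitur, since the offending vertices are the interior vertices of that geodesic, not $v$. Concretely, let $H$ be the cycle $v,a,b,c$ with a pendant vertex $p$ attached to $b$. Then $S_0=\{a,b\}$ is a $\gamma_{con}(H)$-set not containing $v$, but $S_0\cup\{v\}$ is not convex: the geodesic $v,c,b$ escapes it. In fact every convex dominating set of $H$ containing $v$ must contain all of $\{v,a,b,c\}$, hence has size at least $\gamma_{con}(H)+2$. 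Since your (correct) lower-bound argument gives $\gamma_{con}(G\circ H)\ge n\cdot\min\{|D|:\ D\mbox{ convex dominating in }H,\ v\in D\}$, this example yields $\gamma_{con}(G\circ H)\ge 4n>n(\gamma_{con}(H)+1)=3n$. So the gap cannot be patched by a cleverer construction: this is exactly where the claimed analogy with connected domination (where $S_0\cup\{v\}$ \emph{is} automatically connected whenever $S_0$ is connected and dominates $v$) breaks down, and the second value in the displayed set fails for such roots.
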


\begin{theorem}\label{propt2}
If $T_1, T_2$ are trees, then $\gamma_{con}(T_1\circ T_2)=n(T_1) \gamma_{con}(T_2)$ if and only if the rooted vertex $v$ is not an end vertex of a tree $T_2.$
\end{theorem}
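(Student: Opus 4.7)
The plan is to reduce Theorem~\ref{propt2} directly to Theorem~\ref{propt1} via the simple observation that in any tree a vertex set is convex if and only if it is connected. Indeed, between any two vertices of a tree there exists a unique path, which is therefore the unique shortest path, so a subset $D$ of vertices is convex precisely when, for every pair $u,v\in D$, the unique $u$-$v$ path lies in $D$; this in turn is equivalent to the induced subgraph $\langle D\rangle$ being connected. Hence for any tree $T$ we have $\gamma_{con}(T)=\gamma_c(T)$, and more strongly a set $D$ is a $\gamma_{con}(T)$-set if and only if it is a $\gamma_c(T)$-set.

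Next I would invoke Lemma~\ref{obs1}, which guarantees that the rooted product $T_1\circ T_2$ is itself a tree. Applying the previous observation to both $T_1\circ T_2$ and $T_2$ yields $\gamma_{con}(T_1\circ T_2)=\gamma_c(T_1\circ T_2)$ and $\gamma_{con}(T_2)=\gamma_c(T_2)$. Therefore the equality $\gamma_{con}(T_1\circ T_2)=n(T_1)\gamma_{con}(T_2)$ holds if and only if $\gamma_c(T_1\circ T_2)=n(T_1)\gamma_c(T_2)$, and by Theorem~\ref{propt1} this latter equality is equivalent to the root $v$ not being an end vertex of $T_2$.

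The only possible obstacle is to make sure that the convex-equals-connected identification really is a genuine \emph{if and only if} in every tree, without any additional hypothesis on the order of the tree or on the choice of root. Since both implications rely solely on the uniqueness of paths in a tree, this is immediate; the proof therefore collapses to this observation together with a single citation of Theorem~\ref{propt1}. Should one prefer to avoid the reduction, one could instead transcribe the proof of Theorem~\ref{propt1} verbatim, replacing Lemma~\ref{obs2} by its convex analogue (which, once again, coincides with Lemma~\ref{obs2} because we are working inside a tree), but the reduction route is noticeably cleaner and makes transparent why the author declared the proof \emph{omitted due to the analogy with the above ones}.
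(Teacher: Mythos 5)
Your proof is correct, and it takes a genuinely cleaner route than the one the paper has in mind. The paper omits the argument, intending that one transcribe the proof of Theorem~\ref{propt1} with ``connected'' replaced by ``convex'' throughout (in particular replacing Lemma~\ref{obs2} by its convex analogue). You instead reduce the statement itself to Theorem~\ref{propt1}: since any two vertices of a tree are joined by a unique path, which is therefore the unique geodesic, a vertex subset of a tree is convex exactly when it induces a connected subgraph, whence $\gamma_{con}(T)=\gamma_c(T)$ for every tree $T$; combined with Lemma~\ref{obs1}, which guarantees that $T_1\circ T_2$ is again a tree, the equivalence follows in one line from Theorem~\ref{propt1}. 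This is an improvement: it turns the informal ``analogy'' into a precise identity and avoids repeating the computation. The one point you should make explicit is that Theorem~\ref{propt1} carries the hypothesis that both trees have order at least three (it rests on Lemma~\ref{obs2}, which fails for $P_2$), so your reduction establishes Theorem~\ref{propt2} only under that same hypothesis, which is absent from its statement. That restriction is in fact necessary: if $T_2=P_2$ rooted at an end vertex, then $T_1\circ T_2$ is the tree obtained by attaching one pendant vertex to each vertex of $T_1$; it has exactly $n(T_1)$ leaves, so $\gamma_{con}(T_1\circ T_2)=2n(T_1)-n(T_1)=n(T_1)=n(T_1)\gamma_{con}(P_2)$ even though the root is an end vertex. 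This defect lies in the paper's statement rather than in your argument, but since your proof makes the dependence on Theorem~\ref{propt1} explicit, you should state the inherited hypothesis rather than silently assume it.
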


\begin{corollary}
$\gamma_{con}(T_1\circ T_2)=n(T_1)(\gamma_{con}(T_2)+1)$ if and only if the rooted vertex $v$ is an end vertex of $T_2.$
\end{corollary}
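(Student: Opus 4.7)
The plan is to obtain this corollary as an immediate logical consequence of the preceding dichotomy theorem and Theorem \ref{propt2}, exactly in the spirit of the parallel corollary derived for $\gamma_c$ from Theorems \ref{prop1} and \ref{propt1}. First I would invoke the dichotomy: for trees $T_1,T_2$ with $n(T_1)\ge 2$, the convex analogue of Theorem \ref{prop1} guarantees that $\gamma_{con}(T_1\circ T_2)$ takes exactly one of the two values $n(T_1)\gamma_{con}(T_2)$ or $n(T_1)(\gamma_{con}(T_2)+1)$. Since these two values differ by $n(T_1)\ge 2>0$, they are distinct, so the dichotomy is a genuine exclusive disjunction.

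Next I would apply Theorem \ref{propt2}, which characterizes exactly when the smaller alternative is realized: $\gamma_{con}(T_1\circ T_2)=n(T_1)\gamma_{con}(T_2)$ if and only if the root $v$ is not an end vertex of $T_2$. Negating both sides of this biconditional and using the dichotomy to replace the negation of $\gamma_{con}(T_1\circ T_2)=n(T_1)\gamma_{con}(T_2)$ by the equality $\gamma_{con}(T_1\circ T_2)=n(T_1)(\gamma_{con}(T_2)+1)$, I obtain precisely the claimed equivalence.

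There is essentially no genuine obstacle here; the statement is a packaging corollary whose content is already fully contained in the two cited results. The only minor care to take is the verification that the two values in the dichotomy cannot coincide, which is immediate from $n(T_1)\ge 2$, so that reading off the negation is unambiguous. Hence the proof reduces to a one-line contrapositive of Theorem \ref{propt2} combined with the dichotomy, and no further constructions of convex dominating sets or case analysis are required.
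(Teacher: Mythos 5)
Your derivation is exactly the paper's intended one: the paper explicitly obtains the parallel corollary for $\gamma_c$ "from Theorems \ref{prop1} and \ref{propt1}" and states that the convex case is analogous, i.e.\ the corollary follows by combining the two-value dichotomy with the characterization in Theorem \ref{propt2} via contraposition. Your added observation that the two candidate values are distinct (since $n(T_1)>0$) is the only point needing care, and you handle it correctly.
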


\subsection{Weakly connected domination number}

Now we consider the weakly connected domination number of rooted product graphs. A dominating set $D\subset V(G)$ is a {\it weakly connected dominating set} in $G$ if the subgraph $G[D]_{w}=(N_{G}[D],E_{w})$ (also called subgraph weakly induced by $D$) is connected, where $E_{w}$ is the set of all edges having at least one vertex
in $D$. Dunbar et al. \cite{Dunbar} defined the {\it weakly connected domination number} $\gamma_{w}(G)$ of a graph $G$ to be the minimum
cardinality among all weakly connected dominating sets in $G$.

\begin{theorem}
Let $G$ be a graph of order $n\ge 2$. Then for any graph $H$ with root $v$ and at least two vertices,
$$\gamma_w(G\circ H)\in \{n\gamma_w(H), n\gamma_w(H)+\gamma_w(G)\}.$$
\end{theorem}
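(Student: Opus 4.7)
I would split the analysis into two cases: Case~A, where some $\gamma_w(H)$-set contains the root $v$, and Case~B, where none does. For an arbitrary $\gamma_w(G\circ H)$-set $S$, write $S_i=S\cap V_i$ for every $i$, and let $A=\{i:v_i\in S\}$.

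The technical heart is to prove $|S_i|\ge \gamma_w(H)$ for every $i$, and this is the main obstacle. When $v_i\in S$ it is immediate, since $S_i$ is then a weakly connected dominating set of $H_i$. When $v_i\notin S$, I would exploit the cut-vertex role of $v_i$ in $G\circ H$. Since $n(H)\ge 2$ and only $v_i$ has $G$-neighbors outside $V_i$, the set $S_i$ must dominate $V_i\setminus\{v_i\}$ and so be nonempty. Moreover $v_i$ must lie in $N_{H_i}[S_i]$, for otherwise no edge at $v_i$ belongs to the edge set $E_w$ of $G\circ H[S]_w$ and the $S_i$-part of the weakly induced subgraph cannot be joined to the rest. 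Once $v_i\in N_{H_i}[S_i]$, the restriction of $G\circ H[S]_w$ to $V_i$ coincides with $H_i[S_i]_w$ and must itself be connected (because $v_i$ remains a cut vertex of the weakly induced subgraph), so $S_i$ is a weakly connected dominating set of $H_i$.

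In Case~A, summing this bound over $i$ yields $|S|\ge n\gamma_w(H)$, matched by the construction $\bigcup_{i=1}^n D_H^{(i)}$ for a $\gamma_w(H)$-set $D_H\ni v$: all roots lie in the set, so every edge of $G$ belongs to $E_w$ and the global weakly induced subgraph is connected. In Case~B, for each $i\in A$ the set $S_i\ni v_i$ is a weakly connected dominating set of $H_i$ but by hypothesis cannot be a $\gamma_w(H)$-set, so $|S_i|\ge \gamma_w(H)+1$, and summing gives $|S|\ge n\gamma_w(H)+|A|$. To bound $|A|$ I would observe that collapsing each copy $H_i$ to its root $v_i$ turns $G\circ H[S]_w$ into the subgraph of $G$ with vertex set $V$ and edges $\{v_iv_j\in E(G):i\in A\text{ or }j\in A\}$; connectivity forces this quotient to be connected and to cover $V$, so $A$ is a weakly connected dominating set of $G$ and $|A|\ge \gamma_w(G)$. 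The matching upper bound is $\bigcup_{i=1}^n D_H^{(i)}\cup D_G$ for any $\gamma_w(H)$-set $D_H$ (necessarily avoiding $v$) and a $\gamma_w(G)$-set $D_G$; these pieces are disjoint, giving total size $n\gamma_w(H)+\gamma_w(G)$.
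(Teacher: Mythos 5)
Your proposal is correct, and it is in fact considerably more rigorous than the argument the paper gives. The paper fixes one particular $\gamma_w(H)$-set $D_H$ and splits on whether $v\in D_H$ or $v\notin D_H$, then essentially asserts the two conclusions: in its second case it claims without justification that $\bigcup_{i=1}^n D_{H_i}\subset D$ for a minimum weakly connected dominating set $D$ of $G\circ H$ and that exactly $\gamma_w(G)$ further vertices are needed; no lower bound is ever proved, and the dichotomy itself is the wrong one (if the chosen $D_H$ avoids $v$ but some other $\gamma_w(H)$-set contains it, the paper's Case~2 reasoning gives the wrong value). You instead case on whether \emph{some} $\gamma_w(H)$-set contains the root, which is the correct dichotomy, and you actually establish the lower bounds: the cut-vertex argument showing each $S_i$ is a weakly connected dominating set of $H_i$ (hence $|S_i|\ge\gamma_w(H)$, with $|S_i|\ge\gamma_w(H)+1$ when $v_i\in S$ in Case~B), and the quotient argument showing $\{v_i:i\in A\}$ is a weakly connected dominating set of $G$. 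Both constructions for the upper bounds match. What your approach buys is an actual proof, including the stronger conclusion that the two values are attained exactly according to whether the root lies in some $\gamma_w(H)$-set.

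One small imprecision: when you argue that $v_i\in N_{H_i}[S_i]$, the claim that otherwise ``no edge at $v_i$ belongs to $E_w$'' is too strong, since a $G$-edge $v_iv_j$ with $v_j\in S$ would lie in $E_w$. What is true, and what your conclusion actually needs, is that no $H_i$-edge at $v_i$ lies in $E_w$ (neither $v_i$ nor any $H_i$-neighbour of $v_i$ is in $S$), so the nonempty set $N_{H_i}[S_i]\subseteq V_i-\{v_i\}$ is cut off from the rest of the weakly induced subgraph, contradicting connectivity. Note also that, like the paper, you are implicitly assuming $G$ and $H$ connected, which is needed for $\gamma_w$ to be defined; it would be worth stating.
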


\begin{proof} Let $D_H$ be a minimum weakly connected dominating set of $H$ and $D_{H_i}$ be the copy of $D_H$ in the $i^{th}$ copy $H_i$ of $H, 1\leq i\leq n.$ Let $D$ be a minimum weakly connected dominating set of $G\circ H.$ We consider two cases.
\begin{enumerate}
\item $v \in D_H.$ Then identified vertices belong to a minimum weakly connected dominating  set of $G\circ H$ and $\gamma_w(G\circ H)=n \gamma_w(H).$
\item $v\notin D_H.$ Then $\bigcup_{i=1}^n D_{H_i}\subset D$ and identified vertices are dominated by $\bigcup_{i=1}^n D_{H_i}.$ But the set $\bigcup_{i=1}^n D_{H_i}$ is not weakly connected. To make this set weakly connected, we need to add to this set $\gamma_w(G)$ vertices. So $\gamma_w(G\circ H)=|D|=|\bigcup_{i=1}^n D_{H_i}|+\gamma_w(G)=n\gamma_w(H)+\gamma_w(G)$.
\end{enumerate}
\end{proof}

The following lemma presented in \cite{magdal1} will be useful into obtaining some interesting results.

\begin{lemma}\label{magda1}
For any tree $T$ of order $n\ge 3$,
$$\frac{1}{2}(n-n_1(T)+1)\leq \gamma_w(T)\leq n-n_1(T).$$
\end{lemma}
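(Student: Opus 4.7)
The plan is to establish the two inequalities separately, and the two proofs are quite asymmetric.

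For the upper bound, I would exhibit $I(T) = V(T)\setminus \Omega(T)$, the set of non-end vertices, as an explicit weakly connected dominating set. Since $n\ge 3$ and $T$ is connected, no two end vertices of $T$ can be adjacent, so (a)~every end vertex has its unique neighbour inside $I(T)$, which shows that $I(T)$ dominates $T$, and (b)~every edge of $T$ has at least one endpoint in $I(T)$, which makes the weakly induced edge set $E_w$ equal to all of $E(T)$, so $G[I(T)]_w = T$ is connected. This yields $\gamma_w(T)\le |I(T)| = n - n_1(T)$.

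For the lower bound the pivotal observation is that, in a tree, every weakly connected dominating set $D$ is automatically a vertex cover. Indeed, $D$ dominates so $N[D]=V(T)$, hence $G[D]_w$ is a connected spanning subgraph of $T$ with at least $n-1$ edges; but $T$ itself has only $n-1$ edges and $E_w\subseteq E(T)$, forcing $E_w=E(T)$. Equivalently, $V(T)\setminus D$ is an independent set of $T$.

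With this reduction in hand, I would finish with a short double-count. Set $a=|D\cap \Omega(T)|$, $c=|D\setminus \Omega(T)|$, $b=|\Omega(T)\setminus D|$, and $d=|V(T)\setminus (D\cup \Omega(T))|$, so that $|D|=a+c$ and $n-n_1(T)=c+d$. Because $V(T)\setminus D$ is independent, every edge incident to it has its other endpoint in $D$, and summing degrees on $V(T)\setminus D$ therefore counts crossing edges, giving $b+2d\le n-1$ (end vertices contribute $1$ each, non-end vertices at least $2$). Combined with $n-1=a+b+c+d-1$, this rearranges to $|D|=a+c\ge d+1$, and then $2|D|=(a+c)+(a+c)\ge (a+c)+(d+1)\ge c+d+1 = n-n_1(T)+1$, as desired.

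The step I expect to carry the real weight is the reduction \emph{``weakly connected dominating $\Rightarrow$ vertex cover''} in a tree, which is genuinely tree-specific: it uses crucially that a connected spanning subgraph on $n$ vertices already exhausts all $n-1$ edges of $T$. Once that reduction is available, the rest is elementary degree counting; without it one might be tempted to route through K\"{o}nig's theorem, equating the vertex cover number with the matching number of $T$ and arguing via a maximum matching, which is possible but considerably heavier.
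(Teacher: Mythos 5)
The paper does not prove this lemma at all: it is imported verbatim from \cite{magdal1} (and the upper bound is the standard consequence of $\gamma_w(T)\le\gamma_c(T)=n-n_1(T)$), so there is no in-paper argument to compare yours against. That said, your blind proof is correct and self-contained. The upper bound is fine: for $n\ge 3$ no two end vertices of a tree are adjacent, so the set of internal vertices dominates and meets every edge, making $E_w=E(T)$ and the weakly induced subgraph equal to $T$. The lower bound's key reduction is also sound and is indeed where the tree hypothesis enters: since $D$ dominates, $N[D]=V(T)$, so $G[D]_w$ is a connected spanning subgraph of $T$ and must therefore contain all $n-1$ edges, i.e.\ $E_w=E(T)$ and $V(T)\setminus D$ is independent. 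The subsequent count is right: with $a,b,c,d$ as you define them, summing degrees over the independent set $V(T)\setminus D$ gives $b+2d\le n-1=a+b+c+d-1$, hence $|D|=a+c\ge d+1$, and $2|D|\ge (a+c)+(d+1)\ge c+d+1=n-n_1(T)+1$. This is an efficient argument; the original reference reaches the lower bound by a different (longer) route, so your vertex-cover reduction is a genuinely nice shortcut worth keeping.
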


\begin{theorem}
If $T_1,T_2$ are trees and $v$ is not an end vertex of $T_2$, then $$\frac{1}{2}(n_1(T_1)\gamma_w(T_2)+1)\leq \gamma_w(T_1\circ T_2)\leq n_1(T_1)(2 \gamma_w(T_2)-1).$$
\end{theorem}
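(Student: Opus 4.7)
The plan is to combine the structural lemma on rooted products of trees (Lemma \ref{obs1}) with the general bound for weakly connected domination in trees (Lemma \ref{magda1}). First I would observe that since $v$ is a non-end vertex of $T_2$, every copy of $T_2$ keeps its $n_1(T_2)$ leaves as leaves of $T_1 \circ T_2$, while each identified vertex $v_i$ has degree $\delta_{T_1}(u_i) + \delta_{T_2}(v) \ge 0 + 2 = 2$, so no root of a copy becomes an end vertex of $T_1 \circ T_2$. Combined with Lemma \ref{obs1} this gives that $T_1 \circ T_2$ is a tree with $n(T_1 \circ T_2) = n(T_1)n(T_2)$ and $n_1(T_1 \circ T_2) = n(T_1)n_1(T_2)$.

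Feeding these counts into Lemma \ref{magda1} applied to the tree $T_1 \circ T_2$ yields
$$\frac{1}{2}\bigl(n(T_1)(n(T_2) - n_1(T_2)) + 1\bigr) \le \gamma_w(T_1 \circ T_2) \le n(T_1)(n(T_2) - n_1(T_2)).$$
I then apply Lemma \ref{magda1} to $T_2$ itself (here $n(T_2) \ge 3$ because $v$ has degree at least two in $T_2$) to bracket $n(T_2) - n_1(T_2)$ in terms of $\gamma_w(T_2)$: the upper inequality $\gamma_w(T_2) \le n(T_2) - n_1(T_2)$ can be pushed into the left side above, while the lower inequality $\tfrac{1}{2}(n(T_2) - n_1(T_2) + 1) \le \gamma_w(T_2)$ rearranges to $n(T_2) - n_1(T_2) \le 2\gamma_w(T_2) - 1$ and can be pushed into the right side, producing
$$\frac{1}{2}\bigl(n(T_1)\,\gamma_w(T_2) + 1\bigr) \le \gamma_w(T_1 \circ T_2) \le n(T_1)(2\gamma_w(T_2) - 1).$$

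The main obstacle I anticipate is the discrepancy between this conclusion and the printed one: the derivation naturally outputs the total order $n(T_1)$, whereas the theorem as written displays the leaf count $n_1(T_1)$. For the lower bound the replacement of $n(T_1)$ by $n_1(T_1) \le n(T_1)$ only weakens the statement, so the written lower bound follows as an immediate consequence. For the upper bound, however, the inequality points the wrong way, and the written bound actually fails on small examples: taking $T_1 = T_2 = P_3$ with the central vertex of $T_2$ as the root gives $\gamma_w(T_1 \circ T_2) = 3$ while $n_1(T_1)(2\gamma_w(T_2) - 1) = 2 \cdot 1 = 2$. Hence my plan proves the theorem in its natural corrected form with $n_1(T_1)$ replaced by $n(T_1)$, which parallels the factor $n(T_1)$ that already appears in Theorems \ref{propt1} and \ref{propt2}; the printed $n_1(T_1)$ is best read as a typographical slip for $n(T_1)$.
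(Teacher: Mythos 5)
Your derivation is the same as the paper's: apply Lemma \ref{magda1} to the tree $T_1\circ T_2$, substitute the counts $n(T_1\circ T_2)=n(T_1)n(T_2)$ and $n_1(T_1\circ T_2)=n(T_1)n_1(T_2)$ from Lemma \ref{obs1} (using that $v$ is not an end vertex), and then apply Lemma \ref{magda1} to $T_2$ to convert $n(T_2)-n_1(T_2)$ into $\gamma_w(T_2)$ on each side. Your diagnosis of the discrepancy is also correct, and it is confirmed by the paper's own proof: after correctly writing the lower bound as $\frac{1}{2}(n(T_1)n(T_2)-n(T_1)n_1(T_2)+1)$, the paper records the upper bound from the very same substitution as $n_1(T_1)(n(T_2)-n_1(T_2))$ instead of $n(T_1)(n(T_2)-n_1(T_2))$, and carries that factor through to the stated conclusion. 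Since $n_1(T_1)\le n(T_1)$, this swap only weakens the lower bound (so the printed lower bound is still true) but strengthens the upper bound in an unjustified direction, and your counterexample $T_1=T_2=P_3$ with the central root shows the printed upper bound genuinely fails: $\gamma_w(T_1\circ T_2)=3$ while $n_1(T_1)(2\gamma_w(T_2)-1)=2$. What both your argument and the paper's actually establish is
$$\frac{1}{2}\bigl(n(T_1)\gamma_w(T_2)+1\bigr)\le \gamma_w(T_1\circ T_2)\le n(T_1)\bigl(2\gamma_w(T_2)-1\bigr),$$
consistent with the factor $n(T_1)$ appearing in Theorems \ref{propt1} and \ref{propt2} and with the subsequent theorem in the paper, whose upper bound $2n(T_1)\gamma_w(T_2)$ is a weaker consequence of this one. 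So your proposal is a correct proof of the corrected statement by the paper's own method, together with a valid refutation of the statement as printed.
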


\begin{proof} From Lemma \ref{magda1},  $\frac{1}{2}(n(T_1\circ T_2)-n_1(T_1\circ T_2)+1)\leq \gamma_w(T_1\circ T_2)\leq n(T_1\circ T_2)-n_1(T_1\circ T_2).$ Thus, from Lemma \ref{obs1}, we have  $\frac{1}{2}(n(T_1)n(T_2)-n(T_1)n_1(T_2)+1)\leq \gamma_w(T_1\circ T_2)\leq n_1(T_1)(n(T_2)-n_1(T_2)).$
We have $\gamma_w(T_1\circ T_2)\leq n_1(T_1)(n(T_2)-n_1(T_2))=n_1(T_1)2 \frac{1}{2}(n(T_2)-n_1(T_2))=$ $n_1(T_1) 2 \frac{1}{2}(n(T_2)-n_1(T_2)+1-1)\leq n_1(T_1) 2 \gamma_w(T_2)-n_1(T_1)=n_1(T_1)(2 \gamma_w(T_2)-1).$
From the other side we have $\gamma_w(T_1\circ T_2)\geq \frac{1}{2}(n_1(T_1)(n(T_2)-n_1(T_2))+1)\geq \frac{1}{2}(n_1(T_1)\gamma_w(T_2)+1)$ and finally we obtain the desired result.
\end{proof}

By using similar methods, we obtain the following result.

\begin{theorem}
Let $T_1$ be a tree of order $n(T_1)$. If $v$ is a non-end vertex of a tree $T_2$, then $$\frac{1}{2}(\gamma_w(T_2)n(T_1)+1)\leq \gamma_w(T_1\circ T_2)\leq 2 n(T_1)\gamma_w(T_2).$$
\end{theorem}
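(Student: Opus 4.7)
The plan is to mirror the approach used in the previous theorem, exploiting the fact that $T_1\circ T_2$ is itself a tree (by Lemma \ref{obs1}), so Lemma \ref{magda1} applies simultaneously to $T_1\circ T_2$ and to $T_2$. The novelty here compared with the previous theorem is that the bound on the leaf count uses $n(T_1)$ rather than $n_1(T_1)$, which is exactly what happens when the root $v$ is a non-end vertex of $T_2$: in that case the second alternative in Lemma \ref{obs1} gives $n_1(T_1\circ T_2)=n(T_1)n_1(T_2)$.

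First I would apply Lemma \ref{magda1} to the tree $T_1\circ T_2$, obtaining
\[
\tfrac{1}{2}\bigl(n(T_1\circ T_2)-n_1(T_1\circ T_2)+1\bigr)\le \gamma_w(T_1\circ T_2)\le n(T_1\circ T_2)-n_1(T_1\circ T_2).
\]
Using Lemma \ref{obs1} together with the hypothesis that $v$ is a non-end vertex of $T_2$, this becomes
\[
\tfrac{1}{2}\bigl(n(T_1)(n(T_2)-n_1(T_2))+1\bigr)\le \gamma_w(T_1\circ T_2)\le n(T_1)(n(T_2)-n_1(T_2)).
\]

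Next I would convert the quantity $n(T_2)-n_1(T_2)$ into $\gamma_w(T_2)$ using Lemma \ref{magda1} applied to $T_2$ itself. For the upper bound, the inequality $\tfrac{1}{2}(n(T_2)-n_1(T_2)+1)\le \gamma_w(T_2)$ yields $n(T_2)-n_1(T_2)\le 2\gamma_w(T_2)-1\le 2\gamma_w(T_2)$, so the right-hand side is at most $2n(T_1)\gamma_w(T_2)$. For the lower bound, the inequality $\gamma_w(T_2)\le n(T_2)-n_1(T_2)$ from the same lemma gives
\[
\gamma_w(T_1\circ T_2)\ge \tfrac{1}{2}\bigl(n(T_1)\gamma_w(T_2)+1\bigr),
\]
as required.

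There is essentially no obstacle here; the only minor subtlety is checking that Lemma \ref{magda1} is applicable to $T_1\circ T_2$, which requires that $T_1\circ T_2$ be a tree of order at least three. Lemma \ref{obs1} ensures it is a tree of order $n(T_1)n(T_2)\ge 2\cdot 3=6$ when $T_1,T_2$ have order at least two and three respectively, which fits the setting of the previous lemmas in this subsection. Thus the argument is a direct two-step reduction, first bounding $\gamma_w(T_1\circ T_2)$ in terms of the leaf count of the product, then translating that leaf-count expression back into $\gamma_w(T_2)$.
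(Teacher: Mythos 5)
Your proof is correct and is precisely the argument the paper intends: the paper omits this proof with the remark that it follows ``by similar methods'' to the preceding theorem, whose proof is exactly your two-step reduction. You apply Lemma \ref{magda1} to $T_1\circ T_2$, use Lemma \ref{obs1} with the non-end-vertex hypothesis to get $n(T_1\circ T_2)-n_1(T_1\circ T_2)=n(T_1)(n(T_2)-n_1(T_2))$, and then use Lemma \ref{magda1} on $T_2$ itself to trade $n(T_2)-n_1(T_2)$ for $\gamma_w(T_2)$ in each direction, which is the same method.
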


\subsection{Super domination number}

We continue with the super domination number of the rooted product graph. This parameter was defined in \cite{magda}. A subset $D$ of $V$ is called a {\it super dominating set} if for every $v\in V-D$ there exists $u\in N_G(v)\cap D$ such that $N_G(u)\subseteq D\cup \{v\}.$ The minimum cardinality of a super dominating set is called the {\it super domination number} of $G$ and it is denoted by $\gamma_{sp}(G)$. In \cite{magda} a paper was proved the following result.

\begin{lemma}{\em\cite{magda}}\label{magda2}
For any tree of order $n\ge 3$, $\frac{n}{2}\leq \gamma_{sp}(T_1\circ T_2)\leq n-s(T),$ where $s(T)$ is the number of support vertices in $T.$
\end{lemma}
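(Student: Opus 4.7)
The plan is to establish the two bounds independently, each directly from the definition of a super dominating set. The lower bound actually holds for any graph without isolated vertices, not just trees, and this is the approach I would take.

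For the lower bound $\gamma_{sp}(T)\ge n/2$, I would fix any super dominating set $D$ and define a map $\phi\colon V\setminus D\to D$ by assigning to each $v\in V\setminus D$ a neighbour $\phi(v)\in N(v)\cap D$ satisfying $N(\phi(v))\subseteq D\cup\{v\}$; such a vertex exists by the defining property of a super dominating set. The key claim is that $\phi$ is injective: if $\phi(v)=\phi(v')=u$ for two distinct $v,v'\in V\setminus D$, then $v,v'\in N(u)$ and $v,v'\notin D$, but $N(u)\subseteq D\cup\{v\}$ forces $v'\in D\cup\{v\}$, which is impossible. Injectivity gives $|V\setminus D|\le |D|$, and therefore $|D|\ge n/2$.

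For the upper bound $\gamma_{sp}(T)\le n-s(T)$, I would exhibit the explicit super dominating set $D=V(T)\setminus S(T)$, where $S(T)$ denotes the set of support vertices. To verify that $D$ is super dominating, take any $v\in V\setminus D=S(T)$. Since $v$ is a support vertex, it has some leaf neighbour $u$; because $n\ge 3$ no leaf is itself a support vertex, so $u\in D$ and $N(u)=\{v\}\subseteq D\cup\{v\}$, which is precisely the super dominating condition for $v$. Hence $|D|=n-s(T)$ is an upper bound on $\gamma_{sp}(T)$.

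The main point requiring care is the lower bound: one must notice that the witness vertex $\phi(v)$ is forced to have all of its neighbours in $D$ with the single possible exception of $v$ itself, and this constraint is exactly what prevents two distinct vertices outside $D$ from sharing a witness. Beyond this observation both bounds reduce to routine verification; no tree-specific structure is actually needed for the lower bound, while the upper bound only uses that for $n\ge 3$ every support vertex has a leaf neighbour lying in the complement of $S(T)$.
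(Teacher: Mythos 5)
Your proof is correct, but note that the paper itself offers no proof to compare against: this lemma is imported verbatim from the cited manuscript \cite{magda} (and the statement as printed even carries a typo, writing $\gamma_{sp}(T_1\circ T_2)$ where it plainly means $\gamma_{sp}(T)$ for a tree $T$ of order $n\ge 3$ --- you sensibly proved the intended statement). Both of your arguments check out against the paper's definition of super domination. For the lower bound, the witness map $\phi$ is well defined because the defining condition guarantees, for each $v\in V\setminus D$, some $u\in N(v)\cap D$ with $N(u)\subseteq D\cup\{v\}$, and your injectivity argument is exactly right: a second preimage $v'$ would lie in $N(u)\subseteq D\cup\{v\}$ yet in neither $D$ nor $\{v\}$. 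As you say, this needs no tree structure, so you get $\gamma_{sp}(G)\ge n/2$ in general. For the upper bound, the only point needing care is that a leaf neighbour $u$ of a support vertex is itself not a support vertex; this holds because two adjacent leaves would force the tree to be $K_2$, excluded by $n\ge 3$, and then $N(u)=\{v\}\subseteq D\cup\{v\}$ closes the verification. So your write-up is a complete, self-contained proof of a result the paper only quotes.
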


\begin{theorem}
Let $G$ be a graph of order $n\ge 2$. Then for any graph $H$ with root $v$ and at least two vertices,
$$\gamma_{sp}(G\circ H)= n\gamma_{sp}(H).$$
\end{theorem}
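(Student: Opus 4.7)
The plan is to prove both inequalities $\gamma_{sp}(G\circ H)\le n\gamma_{sp}(H)$ and $\gamma_{sp}(G\circ H)\ge n\gamma_{sp}(H)$ separately, with the main effort concentrated on the lower bound.

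For the upper bound, fix a $\gamma_{sp}(H)$-set $D^{\star}$, let $D_i^{\star}\subseteq V_i$ denote its natural copy inside $H_i$, and set $D=\bigcup_{i=1}^{n}D_i^{\star}$. Since distinct copies $V_i,V_j$ meet only at the identified roots, $|D|=n\gamma_{sp}(H)$. To verify that $D$ super dominates $G\circ H$, one picks a vertex $w\in V(G\circ H)\setminus D$, locates $w$ in a copy $H_i$, and uses the super dominating property of $D_i^{\star}$ inside $H_i$ to pick $u\in N_{H_i}(w)\cap D_i^{\star}$ with $N_{H_i}(u)\subseteq D_i^{\star}\cup\{w\}$. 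The subtlety arises only when $u=v_i$, because then $N_{G\circ H}(u)=N_{H_i}(v_i)\cup N_G(v_i)$ contains the extra vertices $\{v_j:v_j\sim v_i\text{ in }G\}$; choosing $D^{\star}$ so that $v\in D^{\star}$ (and hence every $v_j$ lies in $D$) makes this automatic, and one argues that such a choice of $D^{\star}$ can always be arranged, or treats the remaining case by a small modification of the construction.

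For the lower bound, let $D$ be a $\gamma_{sp}(G\circ H)$-set and write $D_i=D\cap V_i$. The goal is to show that $|D_i|\ge\gamma_{sp}(H)$ for every $i$, from which $|D|=\sum_i|D_i|\ge n\gamma_{sp}(H)$ follows. The key observation is that every non-root vertex $w\in V_i\setminus\{v_i\}$ has $N_{G\circ H}(w)=N_{H_i}(w)$, so its super dominator in $G\circ H$ must already lie in $D_i$, and the inclusion $N_{G\circ H}(u)\subseteq D\cup\{w\}$ restricts cleanly to $N_{H_i}(u)\subseteq D_i\cup\{w\}$. Thus $D_i$ super dominates every vertex of $H_i$ except possibly $v_i$. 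If $v_i\in D_i$, or if $v_i$ is super dominated in $G\circ H$ by a vertex inside $H_i$, then $D_i$ is already a super dominating set of $H_i$ and the required bound $|D_i|\ge\gamma_{sp}(H)$ follows at once.

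The main obstacle is the remaining case in which $v_i\notin D_i$ and its super dominator is the root $v_j$ of some adjacent copy. Unpacking $N_{G\circ H}(v_j)\subseteq D\cup\{v_i\}$ forces $v_j\in D_j$, $N_{H_j}(v_j)\subseteq D_j$ and $\{v_k:v_k\sim v_j\text{ in }G,\,k\ne i\}\subseteq D$. The delicate step is to exploit these structural constraints to extract, from $D_j$ (and possibly from the neighboring copies forced to contain their roots), enough extra vertices to compensate for the apparent one-vertex deficit of $D_i$; making this local accounting tight across all such pairs $(i,j)$ so as to recover exactly $n\gamma_{sp}(H)$ is the heart of the argument and the place where the proof must be written with the greatest care.
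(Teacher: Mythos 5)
Your upper bound is essentially correct, and in fact needs no ``small modification'': if $v\notin D^{\star}$ then no super dominator used inside a copy can be a root (a root in $D$ would force $v\in D^{\star}$), while if $v\in D^{\star}$ then every root of every copy lies in $D$, so the extra neighbors $v_j$ cause no harm; either way $\bigcup_i D_i^{\star}$ super dominates $G\circ H$. The real problem is the lower bound. You correctly isolate the one troublesome case --- $v_i\notin D_i$ with its super dominator being the root $v_j$ of an adjacent copy --- but then only announce that one must ``extract enough extra vertices to compensate for the apparent one-vertex deficit''; no such accounting is supplied, so the proof is incomplete as written. Worse, the gap cannot be closed, because the statement is false. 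Take $G=P_2$ and $H=P_3$ rooted at a leaf $v$, so that $G\circ H=P_6$ with vertices $a_1,c_1,v_1,v_2,c_2,a_2$ in path order. Then $\gamma_{sp}(H)=2$ and $n\gamma_{sp}(H)=4$, but $D=\{c_1,v_1,a_2\}$ is a super dominating set of $P_6$: the vertex $a_1$ is handled by $c_1$ since $N(c_1)=\{a_1,v_1\}\subseteq D\cup\{a_1\}$, the vertex $v_2$ is handled by $v_1$ since $N(v_1)=\{c_1,v_2\}\subseteq D\cup\{v_2\}$, and $c_2$ is handled by $a_2$ since $N(a_2)=\{c_2\}$. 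Hence $\gamma_{sp}(P_2\circ P_3)=3<4$, and this is precisely your deficit scenario realized: $D\cap V_2=\{a_2\}$ has size $\gamma_{sp}(H)-1$ and nothing in the other copy compensates.

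What your restriction argument actually yields in the deficit case is weaker than you need: since $v_i\notin D_i$, the super dominator $u\in D_i$ of any $w\in V_i\setminus(D_i\cup\{v_i\})$ satisfies $N_{H_i}(u)\subseteq D_i\cup\{w\}$, which forces $u$ to be neither $v_i$ nor adjacent to $v_i$; consequently $D_i$ is a super dominating set of $H-v$ and the honest conclusion is only $|D_i|\ge\gamma_{sp}(H-v)$, which can be $\gamma_{sp}(H)-1$ (as with $P_3$ rooted at a leaf). Equality $\gamma_{sp}(G\circ H)=n\gamma_{sp}(H)$ therefore requires an additional hypothesis on the root, such as $\gamma_{sp}(H-v)\ge\gamma_{sp}(H)$. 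For what it is worth, the paper's own proof has the same defect: in its second case it simply asserts that every minimum super dominating set of $G\circ H$ contains $\bigcup_i D_{H_i}$ and deduces minimality of that union, an assertion the $P_6$ example refutes. Your instinct about where the difficulty sits was exactly right; the difficulty is genuine and is fatal to the claimed equality, not merely to this proof of it.
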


\begin{proof} Let $D_H$ be a minimum super dominating set of $H$ and $D_{H_i}$ be the copy of $D_H$ in the $i^{th}$ copy $H_i$ of $H, 1\leq i\leq n.$ Let $D$ be a minimum super dominating set of $G\circ H.$  We consider two cases.
\begin{enumerate}
\item $v\in D_H.$ Then identified vertices belong to a minimum super dominating set of $G\circ H$ and $\gamma_{sp}(G\circ H)=n \gamma_{sp}(H).$
\item $v\notin D_H.$ Then $\bigcup_{i=1}^n D_{H_i}\subset D$ and identified vertices are dominated by $\bigcup_{i=1}^n D_{H_i}.$ Also,  every vertex belonging to $\bigcup_{i=1}^n V_i - \bigcup_{i=1}^nD_{H_i} - U,$ where $U$ is the set of identified vertices is super dominated by $\bigcup_{i=1}^n D_{H_i}.$ Suppose there exists a vertex $u\in U$ which is not super dominated by $\bigcup_{i=1}^n D_{H_i}.$ Then for a vertex $u$ does not exist any $v\in \bigcup_{i=1}^n D_{H_i}$ such that $N_{G\circ H}(v)\subseteq \bigcup_{i=1}^n D_{H_i}.$ Then there exists $1\leq i \leq n$ such that in $H_i$ there exists a vertex which is not super dominated, a contradiction. Thus $\bigcup_{i=1}^n D_{H_i}$ is a minimum super dominating set of $G\circ H$ and $\gamma_{sp}(G\circ H)=|\bigcup_{i=1}^n D_{H_i}|=n\gamma_{sp}(H).$
\end{enumerate}
\end{proof}

Now, by using Lemma \ref{magda2} result, we can prove the following.

\begin{theorem}
If $T_1, T_2$ are trees of order $n(T_1)\ge 3$ and $n(T_2)\ge 3$, respectively, then
$$n(T_1)s(T_2)\leq \gamma_{sp}(T_1\circ T_2)\leq n(T_1)(n(T_2)-s(T_2)).$$
\end{theorem}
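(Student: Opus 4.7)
The plan is to derive both inequalities directly from the exact equality $\gamma_{sp}(T_1\circ T_2)=n(T_1)\gamma_{sp}(T_2)$ established in the preceding theorem, combined with Lemma \ref{magda2} applied to $T_2$, which (given $n(T_2)\ge 3$) yields $\frac{n(T_2)}{2}\le \gamma_{sp}(T_2)\le n(T_2)-s(T_2)$. Since trees of order at least three are connected graphs of order at least two, the hypotheses of the previous theorem are met and the problem reduces to bounding $\gamma_{sp}(T_2)$.

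The upper bound is immediate: multiplying the upper estimate of Lemma \ref{magda2} by $n(T_1)$ gives $\gamma_{sp}(T_1\circ T_2)=n(T_1)\gamma_{sp}(T_2)\le n(T_1)(n(T_2)-s(T_2))$, which is exactly the right-hand side of the claim.

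For the lower bound I need $s(T_2)\le \gamma_{sp}(T_2)$, but Lemma \ref{magda2} only supplies $\gamma_{sp}(T_2)\ge n(T_2)/2$. I would bridge the two by the elementary counting inequality $s(T)\le n(T)/2$, valid for any tree $T$ of order at least three. To prove it I would observe that a support vertex has degree at least two, so supports and leaves are disjoint, giving $s(T)+n_1(T)\le n(T)$; moreover, each leaf has a unique neighbor, which must be a support, so the map leaf $\mapsto$ its neighbor is a surjection from the leaf set onto the support set, i.e.\ $s(T)\le n_1(T)$. Adding these two facts yields $2s(T)\le s(T)+n_1(T)\le n(T)$. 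Applied to $T_2$ this gives $s(T_2)\le n(T_2)/2\le \gamma_{sp}(T_2)$, and multiplying by $n(T_1)$ closes the lower bound.

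There is no genuine obstacle in this proof: everything is a one- or two-line consequence of results already in the paper. The only mild subtlety is recognising that the target lower bound $n(T_1)s(T_2)$ is weaker than the bound $n(T_1)n(T_2)/2$ that Lemma \ref{magda2} immediately delivers, so the proof just needs the short counting argument $s(T)\le n(T)/2$ to reconcile the two.
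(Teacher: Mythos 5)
Your proof is correct, but it takes a genuinely different route from the paper's. The paper never invokes the exact product formula $\gamma_{sp}(T_1\circ T_2)=n(T_1)\gamma_{sp}(T_2)$; instead it treats $T_1\circ T_2$ as a single tree (via Lemma \ref{obs1}), computes $s(T_1\circ T_2)=n(T_1)s(T_2)$ by a short case analysis on whether the root is a support vertex of $T_2$, and applies Lemma \ref{magda2} directly to the product tree: the upper bound is $n(T_1\circ T_2)-s(T_1\circ T_2)$, and the lower bound comes from chaining the lemma's two inequalities, $s(T_1\circ T_2)\le n(T_1\circ T_2)-\gamma_{sp}(T_1\circ T_2)\le 2\gamma_{sp}(T_1\circ T_2)-s(T_1\circ T_2)$, whence $s(T_1\circ T_2)\le\gamma_{sp}(T_1\circ T_2)$. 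You instead reduce everything to the factor $T_2$ through the preceding theorem and apply Lemma \ref{magda2} to $T_2$ alone; since the lemma's lower estimate is $n(T_2)/2$ rather than $s(T_2)$, you correctly supply the bridging inequality $s(T)\le n(T)/2$ (supports and leaves are disjoint in a tree of order at least three, and the leaf-to-neighbour map surjects onto the supports). Your version buys a cleaner logical structure --- it never needs to compute $s(T_1\circ T_2)$ --- and it isolates the elementary fact $s(T)\le n(T)/2$ that the paper's lower-bound chain only uses implicitly; the paper's version is independent of the exact formula $\gamma_{sp}(G\circ H)=n\gamma_{sp}(H)$ and relies only on the rooted product of trees being a tree. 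Both arguments are sound.
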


\begin{proof}
If a root $v$ is a support vertex of $T_2,$ then $s(T_1\circ T_2)=n(T_1)+(s(T_2)-1)n(T_1)=n(T_1)s(T_2).$ If $v$ is not a support vertex, then also $s(T_1\circ T_2)=n(T_1)s(T_2).$ The upper bound holds directly from  Lemma \ref{magda2}. For the lower bound we have $\gamma_{sp}(T_1\circ T_2)\leq 2 \gamma_{sp}(T_1\circ T_2)-n(T_1)s(T_2).$ From this inequality we obtain the final lower bound.
\end{proof}

\end{document}